\tikzset{>=latex}
\newtheorem{theorem}{Theorem}
\newtheorem{corollary}{Corollary}
\newtheorem{lemma}{Lemma}
\newtheorem{remark}{Remark}
\numberwithin{equation}{section}
\numberwithin{equation}{section}
\numberwithin{theorem}{section}
\numberwithin{remark}{section}
\numberwithin{lemma}{section}
\numberwithin{figure}{section}
\title{Robust stabilization of hyperbolic PDE-ODE systems via Neural Operator-approximated gain kernels}
\subjclass[2020]{}
\keywords{}
\author{Kaijing Lyu\textsuperscript{\,$\dagger$}}
\address{\textsuperscript{$\dagger$}\, School of Mathematics and Statistics,  Beijing Institute of Technology, 100081 Beijing, China.
	\newline \indent \textsuperscript{$\ast$}\, Chair of Computational Mathematics, DeustoTech, University of Deusto, Avenida de las Universidades 24, 48007 Bilbao, Basque Country, Spain
} 
\email{kjlv@bit.edu.cn}
\thanks{This project has received funding from the European Research Council (ERC) under the European Union's Horizon 2030 research and innovation programme (grant agreement NO: 101096251-CoDeFeL). UB was partially supported by the Grant PID2023-146872OB-I00-DyCMaMod of MICIU (Spain) and by the COST Actions CA24122-Multiscale Stochastics, Patterns, and Analysis of Combinatorial Environments and CA24136-Interactions between Control Theory and Machine Learning.}
\author{Umberto Biccari\textsuperscript{\,$\ast$}} 
\email{umberto.biccari@deusto.es}
\author{Jun-Min Wang\textsuperscript{\,$\dagger$}}  
\email{jmwang@bit.edu.cn}
\keywords{Robust control, Stochastic systems, Neural Networks, Stability of hybrid systems, Backstepping}
\begin{document}

\begin{abstract}
This paper investigates the mean square exponential stabilization problem for a class of coupled PDE-ODE systems with Markov jump parameters. The considered system consists of multiple coupled hyperbolic PDEs and a finite-dimensional ODE, where all system parameters evolve according to a homogeneous continuous-time Markov process. The control design is based on a backstepping approach. To address the computational complexity of solving kernel equations, a DeepONet framework is proposed to learn the mapping from system parameters to the backstepping kernels.  By employing Lyapunov-based analysis, we further prove that the controller obtained from the neural operator(NO) ensures stability of the closed-loop stochastic system. Numerical simulations demonstrate that the proposed approach achieves more than two orders of magnitude speedup compared to traditional numerical solvers, while maintaining high accuracy and ensuring robust closed-loop stability under stochastic switching.
\end{abstract}

\maketitle

\section{Introduction}
This paper studies mean-square exponential stabilization of coupled stochastic hyperbolic PDE-ODE systems. We design computationally efficient boundary controllers for mode-dependent dynamics by combining backstepping design with NO-based kernel approximation.

Coupled PDE-ODE systems arise naturally in several engineering applications, including traffic flow control, where hyperbolic PDEs describe the distributed traffic dynamics and ODEs model lumped dynamics at critical locations, with boundary coupling accounting for their interaction \cite{yang2025adaptive,liard2020pde}. In practice, these systems are often affected by uncertainties stemming from stochastic fundamental diagrams, time-varying traffic demand, random incidents, or weather conditions \cite{wang2012stochastically,zhang2017stochastic,zhang2024mean,auriol2023mean}. Similar stochastic PDE--ODE structures also appear in fluid transport networks and energy systems. Backstepping provides a constructive framework for boundary stabilization of such systems \cite{krstic2008boundary,krstic2008backstepping,di2018stabilization}, although the resulting kernel equations become significantly more challenging to solve in the presence of uncertain or mode-dependent parameters.

To overcome this computational burden, we employ neural operators as surrogates for the backstepping kernels. Neural operators learn mappings between function spaces and enable rapid evaluation once trained \cite{lu2021learning,garciauniversal}. The learned operator is then embedded into the control loop, enabling real-time implementation under stochastic switching.

Recent studies have explored neural-operator-based acceleration of controller design for deterministic PDE systems \cite{bhan2023neural, LV2025112553, wang2025backstepping}. More recently, operator-learning techniques have also been employed for robust stabilization of Markov-jump hyperbolic PDEs under parameter uncertainty \cite{zhang2026operator}. The present work addresses a different setting, namely coupled PDE-ODE systems, where the finite-dimensional dynamics introduce additional difficulties and requires an adapted Lyapunov analysis.

\medskip
\noindent The contributions of this paper are summarized as follows:
\begin{itemize}
	\item We study mean-square exponential stabilization of coupled PDE-ODE systems with Markov-jump parameters under neural-operator approximations of the backstepping kernels. Although the Lyapunov analysis follows the classical robustness paradigm for backstepping systems, the coupled PDE-ODE structure requires a different target system, new kernel equations, and a consequent adaptation of the stability argument.

	\item To reduce the computational cost of solving the kernel equations, we employ an NO-based approximation. 

	\item Numerical results show over two orders of magnitude speedup compared to standard solvers, while maintaining accuracy and closed-loop stability.
\end{itemize}

The paper is organized as follows. Section~\ref{sec:problem_statement} introduces the problem. Section~\ref{sec:backstepping} presents the nominal backstepping design. Section~\ref{sec:NO} establishes the NO approximation of the kernel equations, and Section~\ref{sec:NO_stabilization} analyzes mean-square stability under approximation errors. Section~\ref{sec:simulations} provides numerical validation, and Section~\ref{sec:conclusions} concludes the paper. 

\medskip
\textbf{Notation:} We denote \(L^{2}([0,1],\mathbb{R})\) the space of real-valued square-integrable functions with norm \(\|f\|^{2} = \|f\|_{L^{2}}^{2}\). The supremum norm is denoted by \(\|\cdot\|_{\infty}\). For a random signal \(x(t)\), we denote the conditional expectation of \(x(t)\) at the instant \(t\) with initial condition \(x_{0}\) at instant \(s \leq t\) as \(\mathbb{E}_{[s,x_{0}]}(x(t))\). 
	
\section{Problem formulation}\label{sec:problem_statement}

\noindent In this paper, we consider the following stochastic ODE-PDE system
\begin{align}\label{eq:5}
	\begin{cases}
	    \dot{X}(t) = AX(t)+Bz(0,t), & t\in \mathbb{R}^+	       
	    \\
	    \partial_t w(x,t) = -\Lambda^{+}(t)\partial_x w(x,t)+\Sigma^{++}(t)w(x,t) + \Sigma^{+-}(t)z(x,t), & (x,t) \in [0,1] \times \mathbb{R}^+	       
	    \\
	    \partial_t z(x,t) = \Lambda^{-}(t)\partial_x z(x,t)+\Sigma^{-+}(t)w(x,t) + \Sigma^{--}(t)z(x,t), & (x,t) \in [0,1] \times \mathbb{R}^+	        
	    \\
	    w(0,t) = Q(t)z(0,t)+CX(t), & t\in\mathbb{R}^+	       
	    \\
	    z(1,t) = R(t)w(0,t)+U(t), & t\in\mathbb{R}^+	       
	    \\
	    X(0) = X_0, \; w(x,0) = w_0, \; z(x,0) = z_0, & x \in [0,1]
	\end{cases}
\end{align}
where $X\in\mathbb{R}^2$ is the ODE state, $\mathbf{w} =(w,z)\in\mathbb{R}^3\times\mathbb{R}$ are the PDEs states, $A\in\mathbb{R}^{2\times2}, B\in \mathbb{R}^2$ and $C\in \mathbb{R}^{3\times2}$ are given matrices, the function $U(t):\mathbb{R}^+\to\mathbb{R}$ is a control input to be designed for stabilizing the dynamics, and
\begin{equation}\label{eq:P}
	\mathscr{S} = \Big\{\Lambda^{+}, \Lambda^{-}, \Sigma^{++},\Sigma^{+-}, \Sigma^{-+},\Sigma^{--}, Q, R\Big\} 
\end{equation}
where
$\Lambda^{+}\in\mathbb{R}^{3\times 3}$, $\Lambda^{-}\in \mathbb{R}$, $\Sigma^{++}\in \mathbb{R}^{3\times 3}$,
$\Sigma^{+-}\in \mathbb{R}^{3\times 1}$, $\Sigma^{-+}\in \mathbb{R}^{1\times 3}$, $\Sigma^{--}\in \mathbb{R}$,
$Q\in \mathbb{R}^{3\times 1}$, $R\in \mathbb{R}^{1\times 3}.$

The matrices $A$, $B$, and $C$ are assumed deterministic, while only the transport velocities and coupling coefficients evolve according to the Markov process. This allows us to focus on the effect of stochasticity in the distributed dynamics. Extending the analysis to stochastic matrices $A$, $B$, and $C$ is left for future work.

$\mathscr{S}$ is a set of stochastic parameters following continuous Markov processes.  Each random element $S$ of the set $\mathscr{S}$ is a Markov process with the following properties:
\begin{align*}
    S(t)\in\Big\{S_1,S_2, \cdots, S_{r_S}\Big\},
\end{align*}
whose realization is right continuous. We assume that there exists known lower and upper bounds $\underline{S}< \overline{S}<+\infty$ such that for all $j$, $\underline{S} \leq S_{j} \leq \overline{S}$. Moreover, we will consider the matrix $\Lambda^+(t)\in\mathbb{R}^{3\times 3}$ to be diagonal, that is, 
\begin{align*}
	\Lambda^{+}(t)=\text{diag}\Big(\lambda_{1}(t), \lambda_{2}(t), \lambda_{3}(t)\Big),
\end{align*}
and the lower bounds of $\Lambda^\pm$ are positive: $\underline{\Lambda_j^+}$ and $\underline{\Lambda_j^-}>0$. 
	
Finally, for all $i,j \in\{1, \ldots, r\}$ and $0 \leq t_1 \leq t_2$, we shall denote $P_{i j}(t_1, t_2)$ the probability to switch from mode $S_{i}$ at time $t_1$ to mode $S_j$ at time $t_2$. These probabilities satisfy 
\begin{align*}
	P_{i j}: \mathbb{R}^2 \rightarrow[0,1] \quad\text{ with }\quad \sum_{j=1}^r P_{i j}(t_1, t_2)=1,   
\end{align*}
and follow the Kolmogorov equation \cite{hoyland2009system,kolmanovsky2001mean} 
\begin{equation}\label{eq:kolmogorov}
    \begin{array}{l}
        \displaystyle\partial_{t} P_{i j}(\varrho, t)=-c_j(t) P_{i j}(\varrho, t)+\sum_{k=1}^r P_{i k}(\varrho, t) \tau_{k j}(t), 
        \\[15pt]
        \displaystyle P_{i i}(\varrho, \varrho)=1 \quad\text{ and }\quad P_{i j}(\varrho, \varrho)=0 \text { for } i \neq j,
    \end{array}
\end{equation}
where $\tau_{i j}(t):\mathbb{R}^+\to \mathbb{R}^+$ with
\begin{align*}
	&\tau_{i j}(t):\mathbb{R}^+\to \mathbb{R}^+ \quad\text{ with } \tau_{i i}(t)=0 \text{ and } \tau_{i j}(t)\leq \tau^{\star}, 
	\\
	&c_j(t)=\sum_{k\neq j=1}^r \tau_{j k}(t):\mathbb{R}^+\to \mathbb{R}^+    	
\end{align*}
are non-negative-valued functions. We define the state vector $\alpha(t)$ as a set including all Markov-jumping parameters at time $t$, as follows:
\begin{align*}
	\alpha(t) = \Big\{\Lambda^{+}(t), \Lambda^{-}(t), \Sigma^{++}(t), \Sigma^{+-}(t), \Sigma^{-+}(t),\Sigma^{--}(t), Q(t), R(t)\Big\}
\end{align*}

Let $\mathfrak{R}$ denote the Cartesian product of the index sets $\{1, \dots, r_S\}$ for all $S \in \mathscr{S}$ with a finite number of states $r=r_{\Lambda^{+}} \times r_{\Lambda^{-}} \times r_{\Sigma^{++}} \times r_{\Sigma^{+-}} \times r_{\Sigma^{-+}} \times r_{\Sigma^{--}} \times r_{R} \times r_{Q}$. Each element $j \in \mathfrak{R}$ is the indices of each random parameter. We say that $\alpha(t) = \alpha_j$ if $S(t) = S_{j_S}$ for all $S \in \mathscr{S}$. 

Our main objective is to efficiently design $U$ so to guarantee the mean-square closed-loop stability for \eqref{eq:5}. To do so, we will first consider the system in its nominal version, where the stochastic coefficients $\alpha(t)$ are replaced by constant ones
\begin{displaymath}
    \alpha_0= \Big\{\Lambda^{+}_0, \Lambda^{-}_0, \Sigma^{++}_0, \Sigma^{+-}_0, \Sigma^{-+}_0,\Sigma^{--}_0, Q_0, R_0\Big\}, 
\end{displaymath}
and we will build a backstepping controller stabilizing the dynamics. We will later show that this same controller is capable of stabilizing also \eqref{eq:5}, provided the nominal parameters are sufficiently close to the stochastic ones on average. 
	
\section{Backstepping controller design}\label{sec:backstepping}
	
We consider here that the stochastic parameters are in the nominal mode \(\alpha(t)  = \alpha_0\), that is, we consider the system
\begin{align}\label{eq:system_nomimal}
	\begin{cases}
    	\dot{X}(t) = A X(t) + B z_{nom}(0,t), & t \in\mathbb{R}^+
    	\\
    	\partial_t w_{nom}(x,t) = - \Lambda_0^+ \partial_x w_{nom}(x,t) + \Sigma_0^{++} w_{nom}(x,t) + \Sigma_0^{+-} z_{nom}(x,t), & (x,t) \in [0,1] \times \mathbb{R}^+
    	\\
    	\partial_t z_{nom}(x,t) = \Lambda_0^- \partial_x z_{nom}(x,t) + \Sigma_0^{-+} w_{nom}(x,t) + \Sigma_0^{--} z_{nom}(x,t), & (x,t) \in [0,1] \times \mathbb{R}^+
    	\\
    	w_{nom}(0,t) = Q_0 z_{nom}(0,t) + C X(t), & t \in\mathbb{R}^+
   		\\
    	z_{nom}(1,t) = R_0 w_{nom}(0,t) + U(t), & t \in\mathbb{R}^+
	\end{cases}
\end{align}
	
We want to employ backstepping to design a control $U$ to stabilize $X$ in \eqref{eq:system_nomimal}, that is, $X(t)\to 0$ as $t\to +\infty$. To do that, we introduce the following Volterra transformation between the states $(X,w_{nom},z_{nom})$ and $(X,\theta,\rho)$
\begin{equation}\label{eq:transformation}
    \begin{array}{l}
        \theta(x,t) = w_{nom}(x,t),
        \\
        \displaystyle\rho(x,t) = z_{nom}(x,t) - \int_0^x K_0(x, \xi) w_{nom}(\xi, t)\, d \xi + \int_0^x N_0(x, \xi) z_{nom}(\xi, t)\, d \xi - \gamma(x)X(t)
    \end{array}
\end{equation}
with ${\theta}=(\theta_1,\theta_2,\theta_3)$ and where $K_0(x, \xi) \in \mathbb{R}^{1 \times 3}$, $N_0(x, \xi) \in \mathbb{R}$ and $\gamma(x)\in \mathbb{R}$ are kernel functions to be determined on the triangular domain $\mathcal{T} = \{0 \leq \xi \leq x \leq 1\}$, so that \eqref{eq:system_nomimal} is mapped into the following target system
\begin{align}\label{eq:target-1}
	\begin{cases}
	    \dot{X}(t) = (A+BK)X(t)+B\rho(0,t), & t \in \mathbb{R}^+
	    \\
	    \displaystyle \theta_t(x, t) = -\Lambda_0^+ \theta_x + \Sigma_0^{++} \theta+ \Sigma_0^{+-} \rho+ D_0(x)X & 
	    \\
	    \displaystyle \quad\quad\quad\quad\quad + \int_0^x \Big(C_0^+(x, \xi) \theta +C_0^-(x, \xi) \rho\Big)\, d \xi, & (x,t) \in [0,1] \times \mathbb{R}^+
	    \\
	    \rho_t(x, t) = \Lambda_0^- \rho_x(x, t) + {\Sigma}_0^{--} \rho(x, t), & (x,t) \in [0,1] \times \mathbb{R}^+
	    \\
	    \theta(0, t) = Q_0 \rho(0, t)+C_0 X(t), \; \rho(1, t) = 0, & t \in \mathbb{R}^+
	\end{cases}
\end{align}

In \eqref{eq:target-1}, $C_0^+(x, \xi) \in \mathbb{R}^{3 \times 3}$, $C_0^-(x, \xi) \in \mathbb{R}^{3 \times 1}$, $D_0 \in \mathbb{R}^{3 \times 1}$ and $C_0 \in \mathbb{R}^{3 \times 1}$ are bounded coefficients defined on $\mathcal T$ as
\begin{displaymath}
    \begin{aligned}
        & C_0^+(x, y) = \Sigma^{+-}_0K_0(x, y) + \int_y^x C_0^-(x, s)K_0(s, y)\,ds 
        \\
        & C_0^-(x, y) = \Sigma^{+-}_0N_0(x, y) + \int_y^x C_0^-(x, s)N_0(s, y)\,ds  
        \\
        & D_0(x) = \Sigma^{+-}_0\gamma(x) + \int_0^x C_0^-(x, y)\gamma(y)\, dy  
        \\
        & C_0 = C + Q_0 K, \quad\text{ with } K = \gamma(0)
    \end{aligned}
\end{displaymath}

Moreover, we shall define $\mathcal L_0: \mathbf{w}=(w,z) \mapsto \Theta = (\theta,\rho)$ the map associated with \eqref{eq:transformation}. By differentiating \eqref{eq:transformation} with respect to $x$ and $t$, we obtain that the kernels ${K}_0(x, \xi)$, $N_0(x, \xi) $ and $\gamma(x)$ satisfy the following equations
\begin{align}\label{eq:kernel_1}
    \begin{cases}
        \Lambda_0^-(K_0)_x(x, \xi) = (K_0)_{\xi}(x, \xi) \Lambda_0^+ +N_0(x, \xi)\Sigma_0^{-+} + K_0(x, \xi)  (\Sigma_0^{++}-\Sigma_0^{--}\mathbf{I}_3), & (x,\xi)\in [0,1]\times\mathcal T
        \\
        \Lambda_0^-(N_0)_x(x, \xi)+\Lambda_0^-(N_0)_{\xi}(x, \xi) = K_0(x, \xi) \Sigma_0^{+-}, & (x,\xi)\in [0,1]\times\mathcal T
        \\
        K_0(x, x)\left(\Lambda_0^- \mathbf{I}_3+\Lambda_0^+\right)  =-\Sigma_0^{-+}, & x\in [0,1]
        \\
        \Lambda_0^- N_0(x, 0)-K_0(x, 0) \Lambda_0^+ Q_0 =\gamma(x)B, & x\in [0,1]
        \\
        \Lambda_0^-\gamma'(x) =\gamma(x)A-\Sigma_0^{--}\gamma(x)-K_0(x,0)\Lambda_0^+C, & x\in [0,1]
    \end{cases}
\end{align}
where $\mathbf{I}_3$ is the $3 \times 3 $ identity matrix. The derivations of kernel PDEs is given in the supplementary material. Moreover, since the backstepping transformation is invertible, with inverse in the same form
\begin{align}\label{eq:inverse_2}
    z_{nom}(x,t)= \rho(x,t) +\int_0^xL_0(x,\xi)w_{nom}(\xi,t)d\xi + \int_0^x M_0(x,\xi)\rho(\xi,t) d\xi +T_0(x)X(t) 
\end{align}
where $L_0, M_0$ and $T_0$ are inverse transformation kernels, there exist positive constants $b_1,b_2>0$ such that
\begin{align}\label{backstepping_equivalence}
    b_1 \Big(\|w(t)\| + \|z(t)\| + |X|\Big)^2 \leq \Big(\|\theta(t)\| + \|\rho(t)\| + |X|\Big)^2 \leq b_2 \Big(\|w(t)\| + \|z(t)\|+ |X|\Big)^2.
\end{align}
The kernel equations \eqref{eq:kernel_1} admit a unique solution, as guaranteed by the following Lemma. 
\begin{lemma}\label{Bound}
Let $\mathbb{D}_3(\mathbb{R})$ denote the space of all $3 \times 3$ real diagonal matrices, and consider a compact set
\begin{align*}
    \Omega \subset \mathbb{D}_3(\mathbb{R}) \times \mathbb{R}^2 \times \mathbb{R}^{3\times3} \times (\mathbb{R}^{3\times1})^2 \times (\mathbb{R}^{1\times3})^2.   
\end{align*}
Then, for all $\alpha_0\in\Omega$ there exists a unique solution
\begin{align*}
    (K_0,N_0,\gamma) \in L^\infty(\mathcal T\times \mathcal T \times [0,1])
\end{align*}
of \eqref{eq:kernel_1}. Moreover the operator $\mathcal{K}: \Omega \mapsto L^{\infty}(\mathcal{T}\times\mathcal{T}\times [0,1])$
\begin{align*}
   \mathcal{K}(\alpha_0)(x,y) = \Big(K_0(x,\xi),N_0(x,\xi),\gamma(x)\Big)	
\end{align*}
is locally Lipschitz.
\end{lemma}

\begin{proof}
Let us denote $\mathcal T_1=\mathcal T\times\mathcal T\times[0,1]$. We are going to show that there exists a constant $h_U>0$, depending only on the set $U$, such that for all $\alpha_a,\alpha_b\in U$,
\begin{align*} 
	\|K(\alpha_a)-K(\alpha_b)\|_{(L^\infty(\mathcal T_1))^3}\le h_U\|\delta_a-\delta_b\|,
\end{align*} 
where $\|\cdot\|_{(L^\infty(\mathcal T_1))^3}:=\|\cdot\|_{L^\infty(\mathcal T_1)}$.
	
For each fixed $\delta_0$, the kernel equations admit a unique solution $K(\delta_0)\in(L^\infty(\mathcal T))^3$ and depend Lipschitz continuously on the parameters ([3, Theorem 4.1]). Moreover, there exists $M_\Omega>0$ such that, for all $\alpha_0\in\Omega$, we have $\|K(\alpha_0)\|_{(L^\infty(\mathcal T_1))^3}\le M_\Omega$
	
Denote $K_a$ (resp. $K_b$) the kernels associated with parameters $\alpha_a$ (resp. $\alpha_b$), and let $\Delta K:=K_a-K_b$. The proof follows the classical successive approximations method. Integrating along characteristic lines yields the pointwise estimate 	\begin{align*}	
	|\Delta K(x,\xi)|\le h_1\|\alpha_a-\alpha_b\|+h_2\int_\xi^x|\Delta K(s,\xi)|ds,
\end{align*}
where $h_1,h_2>0$ depend only on $\Omega$. Taking the $L^\infty$ norm on $\mathcal T$ and applying the Volterra-Gr\"onwall inequality gives
\begin{align*}
	\|\Delta K\|_{L^\infty(\mathcal T_1)}\le h_\Omega\|\alpha_a-\alpha_b\|,
\end{align*}
for some $h_\Omega>0$. Applying this estimate to each of the two kernel components yields
\begin{align*}
	\|K(\alpha_a)-K(\alpha_b)\|_{(L^\infty(\mathcal T_1))^3}\le h_\Omega\|\alpha_a-\alpha_b\|.
\end{align*}
This completes the proof.
\end{proof}

Finally, using the solutions to the target system \eqref{eq:target-1}, we can design a stabilizing control law for \eqref{eq:system_nomimal} as follows 
\begin{align}
    U(t) = -R_0w_{nom}(1,t) + \int_0^1 K_0(1,\xi)w_{nom}(\xi,t)\,d\xi + \int_0^1 N_0(1,\xi)z_{nom}(\xi,t)\Big)\,d\xi + \gamma(1)X(t). \label{eq:U} 
\end{align}

\begin{theorem}\label{thm:4.2}
Assume that $(A, B)$ are stabilizable. Define the control law $U$ as in \eqref{eq:U}, where $K_0$, $L_0$ and $\gamma$ are given by \eqref{eq:kernel_1}. Assume furthermore that the matrix $A+BK$ is Hurwitz and that $C^+_0, C^-_0 \in \mathcal{L}^{\infty}(\mathcal{T})$. Then, \eqref{eq:system_nomimal} admits a zero equilibrium which is exponentially stable in the $L^2$ sense.
\end{theorem}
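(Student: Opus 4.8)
The plan is to reduce the claim to exponential stability of the target system \eqref{eq:target-1} and then close the argument with a single weighted Lyapunov functional. By Theorem \ref{Bound} the kernels $(K_0,N_0,\gamma)$ are bounded, the Volterra map $\mathcal L_0$ is boundedly invertible with inverse of the form \eqref{eq:inverse_2}, and the norm equivalence \eqref{backstepping_equivalence} holds; hence $L^{2}$-exponential stability of the origin of \eqref{eq:system_nomimal} is equivalent to exponential decay of $t\mapsto |X(t)|^{2}+\|\theta(t)\|^{2}+\|\rho(t)\|^{2}$ along \eqref{eq:target-1}. The target system has an almost cascade structure: $\rho$ solves a pure transport equation with homogeneous datum at $x=1$, so $\rho$ is driven only by its own decay (in fact $\rho\equiv 0$ for $t\geq 1/\Lambda_0^{-}$); the ODE $\dot X=(A+BK)X+B\rho(0,t)$ then inherits decay from the Hurwitz matrix $A+BK$; and the $\theta$-equation is a transport equation whose boundary datum $Q_0\rho(0,t)+C_0X(t)$ and in-domain sources (the reaction $\Sigma_0^{++}\theta$, the Volterra terms with kernels $C_0^{\pm}\in\mathcal{L}^{\infty}(\mathcal T)$, and $D_0(x)X$) all vanish as $\rho,X\to 0$. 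Rather than tracking this cascade by hand, I would make it quantitative with a Lyapunov functional.

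Concretely, since $A+BK$ is Hurwitz let $P=P^{\top}\succ 0$ solve $(A+BK)^{\top}P+P(A+BK)=-I_{2}$, and set
\[
V(t) = X(t)^{\top}PX(t) + a\int_{0}^{1}e^{-\mu x}|\theta(x,t)|^{2}\,dx + b\int_{0}^{1}e^{\mu x}|\rho(x,t)|^{2}\,dx,
\]
with constants $a,b,\mu>0$ to be fixed. Differentiating along \eqref{eq:target-1} and integrating the transport terms by parts produces: from the ODE, $-|X|^{2}+2X^{\top}PB\,\rho(0,t)$; from the $\rho$-term, the favourable boundary contribution $-b\Lambda_0^{-}|\rho(0,t)|^{2}$ (the $x=1$ term vanishes) together with the bulk term $-(b\mu\Lambda_0^{-}-2b\Sigma_0^{--})\int_{0}^{1}e^{\mu x}|\rho|^{2}$; and from the $\theta$-term, the dissipative bulk term $-a\mu\int_{0}^{1}e^{-\mu x}\theta^{\top}\Lambda_0^{+}\theta$, the nonpositive term $-ae^{-\mu}\theta(1,t)^{\top}\Lambda_0^{+}\theta(1,t)$, the sign-indefinite boundary term $+a\,\theta(0,t)^{\top}\Lambda_0^{+}\theta(0,t)=a(Q_0\rho(0,t)+C_0X(t))^{\top}\Lambda_0^{+}(Q_0\rho(0,t)+C_0X(t))$, and cross terms. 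The key estimate is that the nonlocal terms are controlled by the \emph{weighted} norms with a gain vanishing as $\mu\to\infty$: from $\int_{0}^{x}|\theta(\xi)|\,d\xi\leq \mu^{-1/2}e^{\mu x/2}\big(\int_{0}^{1}e^{-\mu\xi}|\theta|^{2}d\xi\big)^{1/2}$ and $\int_{0}^{1}e^{-\mu x}|\theta(x)|\,dx\leq \mu^{-1/2}\big(\int_{0}^{1}e^{-\mu x}|\theta|^{2}dx\big)^{1/2}$ (Cauchy--Schwarz), the $C_0^{+},C_0^{-},D_0$ contributions are bounded by $a\,\mathrm{const}\cdot\mu^{-1/2}\big(\int_{0}^{1}e^{-\mu x}|\theta|^{2}+\int_{0}^{1}e^{\mu x}|\rho|^{2}+|X|^{2}\big)$, while $\Sigma_0^{++}\theta$ and $\Sigma_0^{+-}\rho$ are absorbed by the bulk terms for $\mu$ large.

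The constants are then fixed in the order $P\to a\to b\to\mu$, which is non-circular: first take $a$ small so that the $|X|^{2}$-contribution from $a\theta(0,t)^{\top}\Lambda_0^{+}\theta(0,t)$ (bounded by $2a\|\Lambda_0^{+}\|\,\|C_0\|^{2}|X|^{2}$) leaves $-|X|^{2}+\tfrac12|X|^{2}$ strictly negative, where $\tfrac12|X|^{2}$ comes from Young's inequality on $2X^{\top}PB\rho(0,t)$; next take $b$ large so that $b\Lambda_0^{-}|\rho(0,t)|^{2}$ dominates the $|\rho(0,t)|^{2}$-terms produced by that Young step and by $2a\|\Lambda_0^{+}\|\,\|Q_0\|^{2}|\rho(0,t)|^{2}$; finally enlarge $\mu$ so that $a\mu\int e^{-\mu x}\theta^{\top}\Lambda_0^{+}\theta$ and $(b\mu\Lambda_0^{-}-2b\Sigma_0^{--})\int e^{\mu x}|\rho|^{2}$ outweigh the remaining $\mu$-independent and $\mu^{-1/2}$-small terms. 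Collecting, $\dot V(t)\leq -\eta V(t)$ for some $\eta>0$, hence $V(t)\leq V(0)e^{-\eta t}$; since (for the now-fixed $\mu$) $V$ is equivalent to $|X(t)|^{2}+\|\theta(t)\|^{2}+\|\rho(t)\|^{2}$, the origin of \eqref{eq:target-1} is exponentially stable, and by \eqref{backstepping_equivalence} so is that of \eqref{eq:system_nomimal}. These computations are first performed on smooth solutions and then extended to $L^{2}$ data by density, using the $\mathcal{L}^{\infty}$ bounds on the kernels to make the Volterra operators bounded on $L^{2}([0,1])$.

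The main obstacle is the destabilizing boundary term $+a\,\theta(0,t)^{\top}\Lambda_0^{+}\theta(0,t)$ at $x=0$: unlike the $x=1$ term it has the wrong sign and couples $\theta$ back to $\rho(0,t)$ and $X$, so the three pieces of $V$ cannot be treated independently. The remedy — a small weight on the $\theta$-integral, a large weight on the $\rho$-integral (exploiting its strictly dissipative boundary term), and a large exponential rate $\mu$ — has to be organized so the successive choices do not feed back on each other, and one must check that the nonlocal $C_0^{\pm},D_0$ terms are genuinely controlled by the weighted norms, since here the backstepping transformation does \emph{not} eliminate the in-domain coupling in the $\theta$-equation.
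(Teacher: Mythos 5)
Your proposal is correct and follows essentially the same route as the paper: both prove exponential decay of the target system \eqref{eq:target-1} via a Lyapunov functional combining $X^{\top}PX$ (with $P$ from a Lyapunov equation for the Hurwitz matrix $A+BK$) with exponentially weighted $L^{2}$ norms of $\theta$ and $\rho$, and then transfer the decay back through the norm equivalence \eqref{backstepping_equivalence} of the invertible backstepping transformation. The paper only sketches the differentiation/integration-by-parts step, whereas you carry out the boundary and nonlocal-term estimates and the non-circular choice of the weights $a$, $b$, $\mu$ explicitly; this is a faithful elaboration of the same argument rather than a different one.
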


\begin{proof} 
Consider the Lyapunov functional
\begin{align*}
	V_0(t)=\int_0^1 (L_0w(x,t))^T D_0(x)L_0w(x,t)\,dx + X(t)^TPX(t),
\end{align*}	
where
\begin{align*}
	D_0(x)=\operatorname{Diag}\left\{\frac{e^{-\nu x/\lambda_1^0}}{\lambda_1^0}, \frac{e^{-\nu x/\lambda_2^0}}{\lambda_2^0}, \frac{e^{-\nu x/\lambda_3^0}}{\lambda_3^0}, \frac{ae^{\nu x/\Lambda_0^-}}{\Lambda_0^-} \right\},
\end{align*}
with $a,\nu>0$, and where $P=P^T>0$ solves
\begin{align*} 
	P(A+BK)+(A+BK)^TP=-Q,
\end{align*}
for some $Q=Q^T>0$. Since the backstepping transformation is boundedly invertible, $V_0$ is equivalent to
\begin{align*}
	\|w(t)\|^2+\|z(t)\|^2+|X(t)|^2.
\end{align*}	

Taking the time derivative of $V_0$, substituting the target dynamics \eqref{eq:target-1}, and integrating the transport terms by parts, we obtain dissipative bulk terms, boundary terms, and distributed coupling terms involving $C_0^+$, $C_0^-$, and $D_0$. The boundary terms are handled using the boundary conditions
\begin{align*}
	\theta(0,t)=Q_0\rho(0,t)+C_0X(t),\qquad \rho(1,t)=0.
\end{align*}	

The distributed coupling terms are estimated by Young's inequality, together with the boundedness of $C_0^+$, $C_0^-$, and $D_0$. The ODE contribution is controlled by the Lyapunov equation for $P$. Choosing $a>0$ and $\nu>0$ sufficiently large, all positive terms are absorbed into the dissipative contributions, yielding the existence of $\eta_0>0$ such that
\begin{align*}
	\dot V_0(t)\le -\eta_0 V_0(t).
\end{align*}	
Therefore,
\begin{align*}
	V_0(t)\le V_0(0)e^{-\eta_0 t},
\end{align*}	
and, by the equivalence between $V_0$ and the $L^2$-norm of the original variables, the zero equilibrium of \eqref{eq:target-1} is exponentially stable in the $L^2$ sense.
\end{proof}

\section{DeepONet approximation of the backstepping kernels}\label{sec:NO}
	
To implement the controller \eqref{eq:U}, it is necessary to compute the kernels $K_0$, $N_0$ and $\gamma$ by solving \eqref{eq:kernel_1}. This, however, is most often computationally expensive, especially for systems with high spatial resolution. This computational burden limits the applicability of backstepping-based controllers in real-time scenarios.
	
To overcome this challenge, we leverage the DeepONet to learn the mapping from system parameters to the kernel functions. DeepONet is a neural network architecture introduced in \cite{lu2021learning} for learning operators. It is by now well-known (see \cite[Theorem 2.1]{Approximation}) that DeepONet allows to approximate continuous operators defined over compact sets up to any desired tolerance $\varepsilon$ depending only on the network structure. 

In particular, in our case, we have the following result as a direct consequence of \cite[Theorem 2.1]{Approximation} and Lemma \ref{Bound}. 

\begin{corollary}\label{th:NO}
For all $\varepsilon> 0$, there exists a neural operator $\mathcal{\hat{K}}: \Omega \mapsto L^{\infty}(\mathcal{T}\times\mathcal{T}\times[0,1])$ such that for all $ (x,\xi) \in \mathcal{T}$ 
\begin{align*}
    |\mathcal{K}(\alpha_0)(x,\xi)-\hat{\mathcal{K}}(\alpha_0)(x,\xi)|\leq\varepsilon.
\end{align*}
\end{corollary}

As we shall see, once trained, DeepONet will enables rapid prediction of $K_0$, $N_0$, and $\gamma$, significantly reducing the computation time. With the learned neural operator $\hat{\mathcal{K}}$, we can define the following NO-approximated nominal control law
\begin{align}\label{eq:U_NO}
    \hat{U}= -R_0{w}(1, t) + \int_0^1\hat{K}_0(1,\xi)w(\xi,t)\,d \xi +\int_0^1\hat{N}_0(1,\xi)z(\xi,t)\,d \xi +\hat{\gamma}(1)X(t). 
\end{align}
Moreover, the effectiveness of \eqref{eq:U_NO} as a controller will be demonstrated in the next Section \ref{sec:NO_stabilization}. 

\begin{remark}
\em{The backstepping operator $\mathcal{L}_0$ and its inverse are constructed for the nominal constant-coefficient system and retained under Markov switching. Mean-square exponential stability of the closed loop is established provided the stochastic parameters remain sufficiently close to their nominal values on average.}
\end{remark}

\section{Lyapunov analysis for the stochastic system under the NO-approximated control law}\label{sec:NO_stabilization}
	
In this section, we prove that the NO-approximated control law \eqref{eq:U_NO} can stabilize the stochastic system \eqref{eq:5}, provided the nominal parameters $S_0$ are sufficiently close to the stochastic ones on average. More precisely, we show the following sufficient condition for robust stabilization.
	
\begin{theorem}\label{them_Main2}
There exist constants $\delta^* > 0$ and $\varepsilon^* > 0$ such that for all $\varepsilon \in (0,\varepsilon^*)$, if
\begin{align*}
	\sum_{S\in\mathscr{S}} \mathbb{E}_{[0,S(0)]} \big( | S(t) - S_0 | \big) \leq \delta^*, \quad\text{ for all } t\in\mathbb{R}^+,
\end{align*}
the closed-loop system \eqref{eq:5} with control \eqref{eq:U_NO} is mean-square exponentially stable: there exist $\varsigma,\zeta > 0$ such that
\begin{align*}
	\mathbb{E}_{(0, (p(0),S(0)))} \big[ p(t) \big] \leq \varsigma e^{-\zeta t} p(0),
\end{align*}
where $p(t) = \|\mathbf{w}(x,t)\|^2 + |X(t)|^2$ and $\mathbb{E}_{(0, (p(0),S(0)))}$ denotes the conditional expectation at time $t$ on the initial conditions $p(0)$ and $S(t) =S(0)$.
\end{theorem}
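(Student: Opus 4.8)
The plan is to combine the backstepping transformation from Section~\ref{sec:backstepping} with an infinitesimal-generator (Dynkin / Itô-for-Markov-jumps) argument applied to a mode-independent Lyapunov functional built from $V_0$. The starting point is that the NO-approximated control $\hat U$ in \eqref{eq:U_NO} differs from the exact nominal control $U$ in \eqref{eq:U} only by terms controlled by $\epsilon$ (via Theorem~\ref{th:NO}) \emph{and} by terms coming from the discrepancy between the stochastic parameters $\alpha(t)$ and the nominal ones $\alpha_0$. So the first step is to write the closed-loop stochastic system \eqref{eq:5} under $\hat U$, apply the nominal backstepping map $\mathcal L_0$ associated with \eqref{eq:transformation}, and track \emph{all} the extra terms this produces. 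Because $\mathcal L_0$ was designed to diagonalize the \emph{nominal} dynamics, plugging in the stochastic coefficients leaves behind perturbation terms of two types: (i) terms proportional to $(S(t)-S_0)$ for $S\in\mathscr S$, appearing in the transport rates, reaction couplings, and boundary conditions of the target system; and (ii) a boundary term at $x=1$ of size $O(\epsilon)+O(|S(t)-S_0|)$ coming from $\hat U - U$. The second step is to record, via \eqref{backstepping_equivalence}, that it suffices to prove mean-square exponential decay of $\tilde p(t):=\|\theta(t)\|^2+\|\rho(t)\|^2+|X(t)|^2$, since this is equivalent to decay of $p(t)$ up to the constants $b_1,b_2$.

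The third step is the core estimate. I would take the same Lyapunov functional as in Theorem~\ref{thm:4.2},
\[
V(t)=\int_0^1 (\mathcal L_0\mathbf w(x,t))^\top D_0(x)\,\mathcal L_0\mathbf w(x,t)\,dx + X(t)^\top P X(t),
\]
with $D_0$ and $P$ exactly as there, and compute the action of the Markov-jump infinitesimal generator $\mathscr A$ on $V$: $\mathscr A V(t) = \frac{d}{dt}\mathbb E[V]\big|_{\text{trajectory}} + \sum_{k} \tau_{j k}(t)\big(V_k - V_j\big)$ in each mode $j$. Since $V$ itself is mode-independent (the weights $D_0$, $P$ are fixed by the \emph{nominal} data), the pure-jump part contributes nothing, and we are left with the trajectory derivative. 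Along nominal dynamics this is $\le -\eta_0 V$; along the stochastic dynamics, the extra terms identified in step~1 contribute, after integration by parts and Young/Cauchy--Schwarz, a bound of the form
\[
\mathscr A V(t) \le -\eta_0 V(t) + C_1\Big(\sum_{S\in\mathscr S}|S(t)-S_0|\Big) V(t) + C_2\,\epsilon^2\, V(t) + C_3\,\epsilon^2,
\]
where $C_1,C_2,C_3$ depend only on the uniform bounds $\underline S,\overline S$, on $\|K_0\|_\infty,\|N_0\|_\infty,\|\gamma\|_\infty$ (finite by Theorem~\ref{Bound}) and on $P$. The $x=1$ boundary term needs care: the exact nominal control was chosen precisely to kill $\rho(1,t)$, so under $\hat U$ one gets $\rho(1,t)=O(\epsilon)+O(|S(t)-S_0|)$ instead of $0$; the trace term $\rho(1,t)^2$ appearing with a negative sign from the integration by parts of the $\rho$-equation can then absorb part of this, leaving a residual of the advertised order. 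The fourth step is to take conditional expectations, use the hypothesis $\sum_{S}\mathbb E_{[0,S(0)]}(|S(t)-S_0|)\le\delta^*$, and choose $\delta^*$ and $\varepsilon^*$ small enough that $C_1\delta^* + C_2(\varepsilon^*)^2 \le \eta_0/2$; a Gr\"onwall argument on $\mathbb E[V(t)]$ then gives $\mathbb E[V(t)]\le e^{-(\eta_0/2)t}\mathbb E[V(0)] + \frac{2C_3\epsilon^2}{\eta_0}$. Strictly, to get the clean homogeneous bound $\mathbb E[p(t)]\le\varsigma e^{-\zeta t}p(0)$ stated in the theorem (with no additive constant), one should note that the $C_3\epsilon^2$ term can itself be absorbed: the $O(\epsilon)$ boundary error is multiplied by the PDE state, so it actually contributes $\epsilon\cdot\|\mathbf w\|\lesssim \epsilon^2 + \|\mathbf w\|^2$-type terms that are proportional to $V$, not a free constant — so with the bilinear bookkeeping done carefully the additive term is absent and one gets $\mathbb E[V(t)]\le e^{-\zeta t}\mathbb E[V(0)]$, hence the result with $\varsigma = b_2/b_1$ times the conditioning constant and $\zeta=\eta_0/2$.

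The main obstacle I anticipate is the careful bookkeeping in step~1: identifying precisely how each stochastic coefficient perturbation propagates through the \emph{nominal} Volterra transformation and its inverse \eqref{eq:inverse_2} (since $\hat U$ and the target-system source terms are written in terms of $w,z,X$, not $\theta,\rho$, one must pass back and forth), and in particular controlling the commutator between the mode-dependent transport operators $\Lambda^\pm(t)\partial_x$ and the fixed kernels $K_0,N_0$ — these generate $\partial_x$-derivatives of the kernels hitting the PDE states, which must be re-expressed via the kernel equations \eqref{eq:kernel_1} and bounded using Theorem~\ref{Bound}. A secondary subtlety is making sure the generator identity $\mathscr A V = \frac{d}{dt}V|_{\text{traj}}$ (no jump contribution) is genuinely valid, i.e. that the same $D_0,P$ are used in every mode — this is exactly why the construction uses the nominal, mode-independent weights, and it is what lets a single Lyapunov functional work across all Markov modes. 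Once these two points are handled, the remaining estimates are routine integrations by parts of the type already invoked in the proof of Theorem~\ref{thm:4.2}.
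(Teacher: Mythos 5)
Your overall strategy (keep the nominal backstepping kernels, view the stochastic closed loop as a perturbed target system with source terms of size $|S(t)-S_0|$ plus an NO-induced boundary error, then run a Lyapunov/infinitesimal-generator argument and absorb the perturbations under the smallness hypotheses) is the same as the paper's, and your observation that $\Gamma^2\lesssim\varepsilon^2 V$ because the kernel error is multiplied by the state is exactly how the paper removes the would-be additive term. One genuine difference is your choice of a mode-independent functional (nominal weights $D_0$, $P$), which makes the jump part of the generator vanish; the paper instead uses mode-dependent weights $D_j$ in \eqref{eq:Dj}, matched to the stochastic transport speeds, and must then estimate the jump contribution $\sum_{\ell}(V_\ell-V_j)\tau_{j\ell}$, which is what produces the $\mathcal{Z}(t)$ term in Lemma \ref{lemma} and forces the Dynkin argument with the exponential weight $\psi(t)=e^{\int_0^t\phi}V(t)$. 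Your variant buys a cleaner generator identity at the price of velocity-mismatch factors $\lambda^j_k/\lambda^0_k$ in the transport integration by parts; since the speeds are bounded below, that is in principle manageable, so this part of your route is legitimate and arguably simpler.

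There is, however, a concrete gap in your core estimate, precisely at the $x=1$ boundary. Since $\rho$ propagates leftward ($\rho_t=\Lambda^-\rho_x+\dots$) and carries the weight $a e^{\nu x/\Lambda^-}/\Lambda^-$, integration by parts of the $\rho$-equation produces the trace term $+\,a e^{\nu/\Lambda^-}\rho^2(1,t)$ with a \emph{positive} sign ($x=1$ is the inflow/actuated boundary), not the negative sign you invoke to ``absorb part of this''. Under the NO controller \eqref{eq:U_NO} the boundary condition becomes $\rho(1,t)=(R_j-R_0)\theta(1,t)+\Gamma(t)$ as in \eqref{eq:target2-5}, so this positive term contains $a e^{\nu/\Lambda_j^-}\bigl((R_j)_k-(R_0)_k\bigr)^2\theta_k^2(1,t)$, and $\theta(1,t)$ is a boundary trace that is \emph{not} controlled by $V(t)$; it therefore cannot be folded into the $C_1\bigl(\sum_S|S(t)-S_0|\bigr)V(t)$ term of your claimed differential inequality. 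The only available negative terms are the outflow traces $-e^{-\nu/\lambda_{k}}\theta_k^2(1,t)$ coming from the $\theta$-equation, and one must impose the sign condition (in the paper, $d_2\sum_{S}\mathbb{E}(|S(t)-S_0|)<e^{-\nu/\bar\Lambda^-}$, which is one of the places where $\delta^*$ and $\varepsilon^*$ genuinely enter) so that these dominate the boundary contribution before any Gr\"onwall step. Without this separate trace bookkeeping your inequality $\mathscr{A}V\le-\eta_0V+C_1(\cdot)V+C_2\varepsilon^2V$ is not justified, and the subsequent Gr\"onwall/expectation argument does not go through as written; repairing it leads you essentially to the structure of Lemma \ref{lemma} in the paper.
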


\begin{remark}
\em{Theorem~\ref{them_Main2} guarantees mean-square exponential convergence of the coupled PDE–ODE state despite two independent perturbation mechanisms: the stochastic variations of the system parameters and the approximation error introduced by the neural operator. The proof shows that both perturbations enter the Lyapunov estimate as additive terms, which can be absorbed into the nominal exponential decay whenever they remain sufficiently small.}
\end{remark}

	
\subsection{Target system in stochastic mode $\alpha_j$}\label{sec:target_stochastic}
Consider that $\alpha(t) = \alpha_j$ at time $t$. Define $\Psi = (\Theta, X)$, with $\Theta = (\theta, \rho) = \mathcal{L}_0 \textbf{w}$ the output of the transformation \eqref{eq:transformation} applied to states of the original stochastic systems \eqref{eq:5}. 
\begin{align}
    \dot{X}(t) =& (A+BK)X(t)+B\rho(0,t)\label{eq:target2-1}
    \\
    \theta_t(x, t) =& -\Lambda_j^+ \theta_x(x, t) + \Sigma_j^{++} \theta + \Sigma_j^{+-} \rho+D_j(x)X + \int_0^x (C_j^+(x, \xi) \theta+ C_j^-(x, \xi) \rho) d \xi 
    \\
    \rho_t(x, t) =& \Lambda_j^- \rho_x(x, t) + \Sigma_j^{--} \rho(x, t) + f_{1j}(x)w(x,t) + f_{2j}(x)z(0,t)+f_{3j}(x)X(t)\nonumber
    \\
    &+\int_0^xf_{4j}(x,\xi)w\,d \xi +\int_0^xf_{5j}(x,\xi)z\,d \xi,\label{eq:target2-4}
    \\
    \theta(0, t) =&  Q_j \rho(0, t)+C_j X(t) 
    \\
    \rho(1, t) =& (R_j-R_0)\theta(1, t)+\Gamma(t)\label{eq:target2-5} 
\end{align}
where
\begin{equation*}
    \begin{array}{l}
        C_j^+(x, y) = \Sigma^{+-}_jK_0(x, y) + \int_y^x C_j^-(x, s)K_0(s, y)\,ds 
        \\
        C_j^-(x, y) = \Sigma^{+-}_jN_0(x, y) + \int_y^x C_j^-(x, s)N_0(s, y)\,ds  
        \\
        D_j(x) = \Sigma^{+-}_j\gamma(x) + \int_0^x C_j^-(x, y)\gamma(y)\, dy  
        \\
        C_j = C + Q_j K
        \\
        f_{1j}(x) = \Sigma_j^{-+} + \Lambda_j^- K_0(x,x) + \Lambda_j^+ K_0(x,x)  
        \\
        f_{2j}(x) = -K_0(x,0)\Lambda_j^+ Q_j + N_0(x,0)\Lambda_j^+ - \gamma(x)B 
        \\
        f_{3j}(x) = \Lambda_j^-\gamma'(x)-A\gamma(x)+\Sigma_j^{--}\gamma(x)-C\Lambda_j^+ K_0(x,0)
        \\
        f_{4j}(x,\xi) = \Lambda_j^- (K_0)_x(x,\xi) - (K_0)_\xi(x,\xi)\Lambda_j^+ + K_0(x,\xi)\Big(-\Sigma_j^{++}+\Sigma_j^{--}\Big)- N_0(x,\xi)\Sigma_j^{-+} 
        \\
        f_{5j}(x,\xi) = \Lambda_j^- (N_0)_x(x,\xi) + \Lambda_j^- (N_0)_\xi(x,\xi) - K_0(x,\xi)\Sigma_j^{+-}
        \\
        \Gamma(t) = \int_0^1\Big(\hat{N}_0(1,\xi)-N_0(1,\xi)\Big)z(\xi,t)\,d \xi + \Big(\hat{\gamma}(1)-\gamma(1)\Big)X(t)      
         + \int_0^1\Big(\hat{K}_0(1,\xi)-K_0(1,\xi)\Big)w\,d \xi
    \end{array}
\end{equation*}

\begin{remark}
\em{The perturbation term $\Gamma(t)$ in \eqref{eq:target2-5} originates from the use of neural-operator–approximated kernels $(\hat K_0,\hat N_0,\hat\gamma)$ in the controller \eqref{eq:U_NO}, instead of the exact backstepping kernels $(K_0,N_0,\gamma)$. This term acts as a boundary disturbance entering the target system and captures the cumulative effect of kernel approximation errors. In the subsequent analysis, $\Gamma(t)$ will be explicitly bounded in terms of the approximation accuracy of the neural operator.}
\end{remark}
	
Moreover, we anticipate that all the terms on the right-hand side of equation \eqref{eq:target2-4} become small if the stochastic parameters are close enough to the nominal ones. This will be made formal in Lemma \ref{lemma} in Section \ref{subsec:Lyapunov analysis}. More precisely, we have the following lemma.
\begin{lemma}\label{lemma4.1}
There exists a constant $M_0$, such that for any realization $S(t) \in \mathscr{S}$ and for any $(x,\xi) \in \mathcal{T}$
\begin{align*}
	|f_{ij}| < M_0 \sum_{S\in \mathscr{S}} | S(t) - S_0|, \quad i \in \{1, 2, 3, 4, 5\}. 
\end{align*}
\end{lemma}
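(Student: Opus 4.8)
The plan is to exploit the structural fact that the coefficients $f_{1j},\dots,f_{5j}$ are built precisely so as to \emph{vanish identically when the stochastic parameters coincide with the nominal ones}, and then to expand each $f_{ij}$ around the nominal mode $\alpha_0$.

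First I would set $\alpha_j=\alpha_0$ in the definitions of $f_{1j},\dots,f_{5j}$, i.e.\ replace every occurrence of $\Lambda_j^{\pm},\Sigma_j^{++},\Sigma_j^{+-},\Sigma_j^{-+},\Sigma_j^{--},Q_j$ by the corresponding nominal constant. Comparing term by term with the kernel equations \eqref{eq:kernel_1} satisfied by the nominal triple $(K_0,N_0,\gamma)$ — the boundary relation for $K_0(x,x)$ for $f_{1\cdot}$, the boundary relation for $N_0(x,0)$ for $f_{2\cdot}$, the $\gamma'$ relation for $f_{3\cdot}$, and the two interior relations for $f_{4\cdot}$ and $f_{5\cdot}$ — one checks that each expression collapses to $0$. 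Hence $f_{i0}\equiv 0$ on $\mathcal{T}$ (respectively on $[0,1]$) for $i=1,\dots,5$.

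Next, since every $f_{ij}$ is an affine function of the mode parameters with coefficients that are polynomial expressions in the \emph{fixed} nominal kernels $K_0,N_0,\gamma$ and their first-order derivatives, I would write $f_{ij}=f_{ij}-f_{i0}$ and regroup. This produces a representation
\[
f_{ij}(x,\xi)=\sum_{S\in\mathscr{S}}\big(S(t)-S_0\big)\,\Phi_S^{(i)}(x,\xi),
\]
in which each $\Phi_S^{(i)}$ is bounded on $\mathcal{T}$: one has $K_0,N_0,\gamma\in L^{\infty}(\mathcal{T})$ by Theorem \ref{Bound}; $\gamma'$ and the combination $(N_0)_x+(N_0)_\xi$ are bounded because \eqref{eq:kernel_1} expresses them algebraically through $K_0,N_0,\gamma$ and the nominal parameters (the strict positivity of the lower bound of $\Lambda^-$ makes the division by $\Lambda_0^-$ harmless); and the remaining first-order derivatives of $K_0$ entering $f_{4j}$ are bounded on the compact triangle $\mathcal{T}$ by the $C^1$ regularity of the kernels. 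Using in addition that $\mathscr{S}$ consists of finitely many parameters each ranging over a bounded set, all the $\|\Phi_S^{(i)}\|_\infty$ are dominated by a single constant $M_0$ depending only on the nominal data and the bounds $\underline{S},\overline{S}$; taking absolute values and summing then yields the asserted estimate for every realization $S(t)\in\mathscr{S}$ and every $(x,\xi)\in\mathcal{T}$.

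The one genuinely technical point — the only step beyond bookkeeping — is the $L^\infty$ bound on the first-order derivatives of $K_0$ that appear in $f_{4j}$, which is not contained in the $L^\infty$ statement of Theorem \ref{Bound}; I would close this gap by invoking the $C^1$ (in fact smooth, since the nominal coefficients are constant) regularity of the solution of \eqref{eq:kernel_1}, obtained from the standard method-of-characteristics/successive-approximation construction on $\mathcal{T}$ (see \cite{di2018stabilization}). All the other contributions, $f_{1j},f_{2j},f_{3j},f_{5j}$, require only the boundedness of $K_0,N_0,\gamma$ together with the algebraic identities in \eqref{eq:kernel_1}.
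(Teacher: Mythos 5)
Your proof follows essentially the same route as the paper's: subtract the nominal kernel identities \eqref{eq:kernel_1} from each $f_{ij}$ so that only differences $S(t)-S_0$ multiplied by kernel-dependent factors remain, then bound those factors uniformly on $\mathcal{T}$ --- exactly what the paper does explicitly for $f_{1j}$ before stating that the remaining cases follow similarly. Your extra care in bounding the first-order derivatives of $K_0$ entering $f_{4j}$ (via the $C^1$ regularity of the constant-coefficient kernel system rather than the bare $L^\infty$ statement of Theorem \ref{Bound}) addresses a point the paper's proof leaves implicit, but it is a refinement of the same argument, not a different one.
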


\begin{proof}
Considering the function $f_{1j}(x)$, for all $x\in[0,1]$, we have
\begin{align*}
	|f_1(\alpha(t))| &=\big(\Sigma^{-+}(t)-\Sigma_0^{-+}\big)+\big(\Lambda^-(t)-\Lambda_0^-\big)K_0(x,x) + K_0(x,x)\big(\Lambda^+(t)-\Lambda_0^+\big)
	\\
	&\le \max\left\{1,\sup_{\mathcal T}\|K_0(x,x)\|\right\}\sum_{S\in\mathcal S}|S(t)-S_0|.
\end{align*}
Consequently, we obtain the existence of a constant $\bar m_1$ such that
\begin{align*} 
	|f_1(\alpha(t))|\le \bar m_1\sum_{S\in\mathcal S}|S(t)-S_0|.
\end{align*}

The other inequalities for $f_2(x)$, $f_3(\xi)$, $f_4(x,\xi)$ and $f_5(x,\xi)$ can also be derived similarly. Defining $M_0:=\max_{i=1,\ldots,5}\bar m_i$. We get
\begin{align*}
	|f_{ij}|<M_0\sum_{S\in\mathcal S}|S(t)-S_0|,\qquad i\in\{1,2,3,4,5\}.	
\end{align*}
This finishes the proof.
\end{proof}

\subsection{Lyapunov analysis}\label{subsec:Lyapunov analysis}
We conduct here a Lyapunov analysis for the stochastic system introduced in Section \ref{sec:target_stochastic}. Consider the following stochastic Lyapunov functionals
\begin{align}\label{eq:lyapunov_stochastic}
    V(t) = \int_0^1(\mathcal{L}_0\mathbf{w})^TD(x,t)\mathcal{L}_0\mathbf{w}dx +X(t)^TPX(t)	
\end{align}
and 
\begin{align*}
    V_j = \int_0^1(\mathcal{L}_0\mathbf{w})^TD_j(x)\mathcal{L}_0\mathbf{w}dx +X(t)^TPX(t),
\end{align*}
where $D(x,t)=D_j(x)$ if $\alpha(t)=\alpha_j$ and
\begin{equation*}
	D_j(x)=\operatorname{Diag}\left\{\frac{e^{-\frac{\nu}{\lambda_1^j} x}}{\lambda_1^j}, \frac{e^{-\frac{\nu}{\lambda_2^j} x}}{\lambda_2^j}, \frac{e^{-\frac{\nu}{\lambda_3^j} x}}{\lambda_3^j}, \frac{ae^{\frac{\nu}{\Lambda_j^-}x}}{\Lambda_j^-}\right\}.
\end{equation*}
If we denote $L_j$ the infinitesimal generator of $V$ obtained by fixing $\alpha(t)=\alpha_j\in \mathscr{S}$, we have
\begin{align}
    L_{j} V(\Psi)=&\frac{d V \left(\Psi, \alpha_{j}\right)}{d \Psi} h_{j}(\Psi)+\sum_{\ell \in \mathfrak{R}}\left(V_{\ell}(\Psi)-V_{j}(\Psi)\right) \tau_{j \ell}(t),\label{eq:LjV}
\end{align}
where $V_{\ell}(\Psi)=V_{\ell}(\Psi,\alpha_\ell)$ and the operator $h_j(\Psi)$ is given by
\begin{align*} 
    h_j(\Psi)=& \begin{pmatrix}
        (A+BK)X(t)+B{\rho}(0,t) 
        \\[10pt]
        \displaystyle -\Lambda_j^+ \theta_x(x, t) + \Sigma_j^{++} \theta(x, t) + \Sigma_j^{+-} \rho(x, t) +\int_0^x C_j^+(x, \xi) \theta(\xi, t)\, d \xi 
        \\
        \displaystyle + \int_0^x C_j^-(x, \xi) \rho(\xi, t)\, d \xi + D_j(x)X(t) 
        \\[10pt]
        \displaystyle\Lambda_j^- \rho_x(x, t) + \bar{\Sigma}_j \rho(x,t) + f_{1j}(x)w(x,t) + f_{2j}(x) z(0,t)+f_{3j}(x)X(t)
        \\
        \displaystyle +\int_0^xf_{4j}(x,\xi)w(\xi,t)\,d \xi + \int_0^xf_{5j}(x,\xi)z(\xi,t)\,d \xi
    \end{pmatrix}
\end{align*}

Moreover, since $\lambda_i^j,i=1,2,3$ and $\Lambda_j^-$ in $D_j(x)$ are bounded, the functional $V$ is equivalent to the $L^2$-norm of $(\theta,\rho,X)$ and, consequently, to the $L^2$-norm of the original state $(w,z,X)$ due to \eqref{backstepping_equivalence}. In particular, there exist two positive constants $k_1, k_2 > 0$ such that 
\begin{align}\label{V_equivalence}
    k_1 \Big(\|\theta(t)\| + \|\rho(t)\| + |X|\Big)^2 \leq V(t) \leq k_2 \Big(\|\theta(t)\| + \|\rho(t)\|+ |X|\Big)^2. 
\end{align}
We have the following lemma.
\begin{lemma}\label{lemma}
There exists  constants $\bar{\eta}>0$, $M_1>0$ and $c_5$, $d_2>0$  such that the Lyapunov functional $V(t)$ satisfies
\begin{align}
    \sum_{j=1}^r P_{i j} (0, t) L_j V(t) \leq& \; -V(t)\bigg(\bar{\eta} - c_5 \mathcal{Z}(t) - \left(M_{1} + c_5 r \tau^{\star}\right) \sum_{S\in \mathscr{S}}\mathbb{E}\left(|S(t) - S_{0}|\right) \bigg) \nonumber 
    \\
    & + \sum_{k=1}^3 \Big( d_2 \sum_{S\in \mathscr{S}}\mathbb{E}\left(|S(t) - S_0|\right) - e^{-\frac{{\nu}}{\bar{\Lambda}}} \Big) \theta_k^2(1, t),
\end{align}
where $\tau^{\star}$ is the largest value of the transition rate and the function $\mathcal{Z}(t)$ is defined as:
\begin{equation*}
    \mathcal{Z}(t)=\sum_{j=1}^r\sum_{S\in \mathscr{S}} | S(t) - S_0|\Big(\partial_t P_{i j}(0, t)+c_j P_{i j}(0, t)\Big). 
\end{equation*}
\end{lemma}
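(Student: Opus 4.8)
\emph{Overall plan.} I would expand $L_jV$ via \eqref{eq:LjV} into the ``flow'' part $\frac{dV(\Psi,\alpha_j)}{d\Psi}h_j(\Psi)$ and the ``jump'' part $\sum_{\ell\in\mathfrak{R}}\big(V_\ell(\Psi,\alpha_\ell)-V_j(\Psi,\alpha_j)\big)\tau_{j\ell}(t)$, estimate each for a generic mode $j$, and recombine after multiplying by $P_{ij}(0,t)$ and summing over $j$. For the flow part, differentiate $V_j$ along \eqref{eq:target2-1}--\eqref{eq:target2-5}: the ODE term gives $\frac{d}{dt}(X^TPX)=-X^TQX+2X^TPB\rho(0,t)$ by the Lyapunov equation, and integrating the PDE terms by parts in $x$ against the exponential weights of \eqref{eq:Dj} produces the decay integrals $-\tfrac{\nu}{\lambda_k^j}\int_0^1 e^{-\nu x/\lambda_k^j}\theta_k^2\,dx$ and $-\tfrac{a\nu}{\Lambda_j^-}\int_0^1 e^{\nu x/\Lambda_j^-}\rho^2\,dx$, the outflow terms $-\sum_{k=1}^3 e^{-\nu/\lambda_k^j}\theta_k^2(1,t)$ and $-a\,\rho^2(0,t)$, and the inflow terms $\sum_k\theta_k^2(0,t)=|Q_j\rho(0,t)+C_jX(t)|^2$ and $a\,e^{\nu/\Lambda_j^-}\rho^2(1,t)$. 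The $\theta$-equation terms already present in the nominal target system \eqref{eq:target-1} ($\Sigma_j^{++}\theta$, $\Sigma_j^{+-}\rho$, the Volterra integrals, the $X(t)$-source) are handled exactly as in the proof of Theorem \ref{thm:4.2}: one picks $\nu$ large enough to dominate the reaction and Volterra coefficients, $Q$ (hence $P$) so that $\lambda_{\min}(Q)$ dominates $\sup_j\|C_j\|^2$, and $a$ so that it dominates $\sup_j\|Q_j\|^2$ and $\|PB\|^2$, splitting $\theta_k^2(0,t)$ and $2X^TPB\rho(0,t)$ by Young's inequality into parts absorbed by $-a\rho^2(0,t)$ and $-X^TQX$. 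Since $\gamma$, $K=\gamma(0)$ and the Volterra coefficients are nominal while $Q_j,C_j,\Sigma_j^{\pm\pm},\Lambda_j^\pm$ range over compact sets, these choices are uniform in $j$, yielding $\frac{dV_j}{d\Psi}h_j\le-\tilde\eta V_j+(\text{new perturbations})$ with $\tilde\eta>0$ independent of $j$.

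\emph{New perturbations.} These are exactly (i) the interior forcing $f_{1j}w+f_{2j}z(0,t)+f_{3j}X+\int_0^x f_{4j}w+\int_0^x f_{5j}z$ of \eqref{eq:target2-4} and (ii) the boundary term $(R_j-R_0)\theta(1,t)+\Gamma(t)$ of \eqref{eq:target2-5}. For (i), pair each term with $\rho$ against the weight and use Lemma \ref{lemma4.1}, the identity $\theta=w$ from \eqref{eq:transformation}, the inverse transform \eqref{eq:inverse_2} to bound $\|z\|\le C(\|\rho\|+\|w\|+|X|)$, and $z(0,t)=\rho(0,t)+\gamma(0)X(t)$; by Young's inequality with a small weight on the resulting $\rho^2(0,t)$ traces (absorbed into the leftover $-a\rho^2(0,t)$), the interior forcing contributes at most $CM_0\big(\sum_S|S(t)-S_0|\big)V_j$. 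For (ii), $a\,e^{\nu/\Lambda_j^-}\rho^2(1,t)\le2a\,e^{\nu/\Lambda_j^-}\|R_j-R_0\|^2|\theta(1,t)|^2+2a\,e^{\nu/\Lambda_j^-}|\Gamma(t)|^2$; boundedness of $R$ gives $\|R_j-R_0\|^2\le C\sum_S|S(t)-S_0|$, so combining the first summand with the outflow term and the uniform bound $e^{-\nu/\lambda_k^j}\ge e^{-\nu/\bar\Lambda}$ yields precisely $\sum_{k=1}^3\big(d_2\sum_S|S(t)-S_0|-e^{-\nu/\bar\Lambda}\big)\theta_k^2(1,t)$; and Theorem \ref{th:NO} gives $|\Gamma(t)|\le\varepsilon(\|w\|+\|z\|+|X|)$, hence $2a\,e^{\nu/\Lambda_j^-}|\Gamma(t)|^2\le C\varepsilon^2 V_j$, which for $\varepsilon$ below the threshold of Theorem \ref{them_Main2} is absorbed into $-\tilde\eta V_j$, leaving $-\bar\eta V_j$ with $\bar\eta>0$.

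\emph{Jump part and recombination.} The $X^TPX$-contributions cancel, so $V_\ell-V_j=\int_0^1(\mathcal{L}_0\mathbf{w})^T(D_\ell(x)-D_j(x))\mathcal{L}_0\mathbf{w}\,dx$; since the weights \eqref{eq:Dj} are Lipschitz in the (bounded) modal parameters, \eqref{V_equivalence} gives $|V_\ell-V_j|\le C(\sum_S|S_\ell-S_0|+\sum_S|S_j-S_0|)V$, whence $\sum_\ell|V_\ell-V_j|\tau_{j\ell}\le CV\sum_\ell\tau_{j\ell}(\sum_S|S_\ell-S_0|+\sum_S|S_j-S_0|)$. Multiplying by $P_{ij}(0,t)$ and summing over $j$: the $\sum_S|S_j-S_0|$ piece is $\le c_5 r\tau^\star V\sum_S\mathbb{E}(|S(t)-S_0|)$ since $c_j=\sum_{k\ne j}\tau_{jk}\le r\tau^\star$, while the $\sum_S|S_\ell-S_0|$ piece equals $c_5 V\mathcal{Z}(t)$ after exchanging the summation order and invoking the Kolmogorov equation \eqref{eq:kolmogorov} in the form $\sum_j P_{ij}(0,t)\tau_{j\ell}(t)=\partial_tP_{i\ell}(0,t)+c_\ell(t)P_{i\ell}(0,t)$. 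Adding the flow bound, using that $\sum_j P_{ij}(0,t)V_j$ is comparable to $V(t)$ by \eqref{V_equivalence}, and writing $\sum_j P_{ij}(0,t)\sum_S|S_j-S_0|=\sum_S\mathbb{E}(|S(t)-S_0|)$, produces the stated inequality, with $M_1$ the interior-forcing constant, $c_5$ the jump-term constant, and $\bar\eta,d_2$ as above.

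\emph{Main obstacle.} The delicate point is the boundary bookkeeping together with the mode-uniform, non-circular choice of $\nu$, $Q$ (hence $P$), $a$ and the Young weights: at $x=0$ the term $|Q_j\rho(0,t)+C_jX(t)|^2$, the leftover $-a\rho^2(0,t)$, $-X^TQX$ and the cross term $2X^TPB\rho(0,t)$ must balance uniformly over modes, which forces $a$ to beat $\sup_j\|Q_j\|^2$ and $\|PB\|^2$ while $\lambda_{\min}(Q)$ beats $\sup_j\|C_j\|^2$ with $C_j=C+Q_jK$; this is possible precisely because $K=\gamma(0)$ and the Volterra coefficients are nominal and $\mathscr{S}$ is a priori bounded. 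The rest is routine once one carefully tracks which error terms scale like $\sum_S|S(t)-S_0|$, like $\mathcal{Z}(t)$, and like $\varepsilon^2$.
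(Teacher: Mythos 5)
Your proposal is correct and follows essentially the same route as the paper: the same split of $L_jV$ into the flow part (estimated via integration by parts against the weights \eqref{eq:Dj}, Young's inequality, Lemma \ref{lemma4.1} for the $f_{ij}$ terms, and the bound $\Gamma^2\lesssim\varepsilon^2V_j$) and the jump part (estimated through the mode-dependence of $D_j$, then recombined with $P_{ij}(0,t)$ using $\sum_jP_{ij}\sum_S|S_j-S_0|=\sum_S\mathbb{E}(|S(t)-S_0|)$ and the Kolmogorov equation to produce $\mathcal{Z}(t)$ and the $c_5r\tau^\star$ term). Your bookkeeping of the mode-uniform choices of $\nu$, $a$, $Q$ and of the splitting $|S_\ell-S(t)|\le|S_\ell-S_0|+|S_0-S(t)|$ is, if anything, slightly more explicit than the paper's, but it is the same argument.
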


\begin{proof}
In what follows, we denote $c_i$ positive constants and consider that $S(t) = S_j$. First of all, we can rewrite the the first term of equation \eqref{eq:LjV} as 
\begin{align}\label{eq:Delta}
    \dfrac{dV_j}{d\Psi }(\Psi )h_j(\Psi ) = \Delta_0+\Delta_1 + \Delta_2,    
\end{align}
where
\begin{equation*}
	\begin{aligned}
		\Delta_0 &= \dfrac{dV_{j}}{d X }\bigg((A+BK)X(t)+B{\rho}(0,t)\bigg)	
		\\
		\Delta_1 &= \dfrac{dV_j}{d {\theta} }\bigg(-\Lambda_j^+ \theta_x(x, t) + \Sigma_j^{++} \theta(x, t) + \Sigma_j^{+-} \rho(x, t) +\int_0^x C_j^+(x, \xi) \theta(\xi, t)\, d \xi 
		\\
		& \quad\quad\quad\quad + \int_0^x C_j^-(x, \xi) \rho(\xi, t)\, d \xi + D_j(x)X(t)\bigg)    
		\\
		\Delta_2 &= \dfrac{dV_j}{d \rho}\bigg(\Lambda_j^- \rho_x(x, t) + f_{1j}(x)w(x,t) + f_{2j}(x)z(0,t) + X(t)f_{3j}(x) + \int_0^xf_{4j}(x,\xi)w(\xi,t)\,d \xi 
		\\
		&\quad\quad\quad\quad + \int_0^xf_{5j}(x,\xi)z(\xi,t)\,d \xi\bigg)
	\end{aligned}	
\end{equation*}

Now, differentiating $V_j$ with respect to time, inserting the dynamics \eqref{eq:target2-1}-\eqref{eq:target2-5}, and integrating by parts, we get
\begin{equation*}
	\begin{aligned}
		\Delta_0 = &-{X}(t)\mathbf{Q}X(t) + 2X^T PB{{\rho}}(0,t)
		\\
		\Delta_1 =& -\theta^2(1,t) e^{-\frac{\nu }{\Lambda_j^+}} + \theta(0,t)^2-\int_0^1\theta^2(x,t)\nu \frac{e^{-\frac{\nu x}{\Lambda_j^+}}}{\Lambda_j^+} dx 
		\\
		&+ 2 \int_0^1  \dfrac{e^{-\frac{\nu x}{\Lambda_j^+}}}{\Lambda_j^+} \theta\bigg[ \Sigma_{j}^{++} {\theta}(x, t)+\Sigma_{j}^{+-} {\rho}+D_j(x)X +\int_{0}^{x} {C}_{j}^{+}(x, \xi) {\theta}(\xi, t) d \xi
		\\
		&\quad\quad\quad\quad\quad\quad\quad\quad +\int_{0}^{x} {C}_{j}^{-}(x, \xi) {\rho}(\xi, t) d \xi \bigg] dx   
		\\
		\Delta_2 &= ae^{\frac{\nu }{\Lambda_j^-}} {\rho}^2(1,t)-a\rho^2(0,t) -a\nu\int_0^1 e^{\frac{\nu x}{\Lambda_j^-}}\rho^2(x,t)dx 
		\\
		&+ {2a}\int_0^1 \dfrac{e^{\frac{\nu x}{\Lambda_j^-}}  }{\Lambda_j^-} {\rho}(x, t)\bigg[\Sigma_j^{--}\rho(x,t) + f_{1j}(x)w(x,t) + f_{2j}(x){z}(0,t)+X(t)f_{3j}(x)
		\\
		& \quad\quad\quad\quad\quad\quad\quad\quad\quad\quad +\int_{0}^{x}f_{4j}(x,\xi){w}(\xi,t)d \xi +\int_{0}^{x}f_{5j}(x,\xi)z(\xi,t)d \xi \bigg] dx 
	\end{aligned}
\end{equation*}
Substituting $\Delta_0,\Delta_1,\Delta_2$ into \eqref{eq:Delta}, we obtain
\begin{align}
    \frac{dV_j}{d\Psi}h_j \leq& -\nu_1 V_j(t) + {\theta}^2(0,t)-a{\rho}^2(0,t)+ae^{\frac{\nu }{\Lambda_j^-}} {\rho}^2(1,t) +\frac{2|PB|^2}{\lambda_{\min}(\mathbf{Q})} |{{\rho}}(0,t)|^2\nonumber
    \\
    &+ 2\int_0^1 \dfrac{e^{\frac{-\nu x}{\Lambda_j^+}}}{\Lambda_j^+}{\theta} \bigg[\Sigma_{j}^{++} {\theta}+\Sigma_{j}^{+-} {\rho} +\int_{0}^{x} {C}_{j}^{+}(x, \xi) {\theta}(\xi, t) d \xi \nonumber 
    \\
    &\quad\quad\quad\quad\quad\quad\quad +\int_{0}^{x} {C}_{j}^{-}(x, \xi) {\rho}(\xi, t) d \xi +D_j(x)X(t) \bigg]dx\nonumber
    \\
    & + {2a}\int_0^1 \dfrac{e^{\frac{\nu x}{\Lambda_j^-}}  }{\Lambda_j^-} {\rho}(x, t) \bigg[\Sigma_j^{--}\rho(x,t) + f_{1j}(x)w(x,t)+f_{2j}(x){z}(0,t)+X(t)f_{3j}(x)\nonumber 
    \\
    &\quad\quad\quad\quad\quad\quad\quad\quad\quad +\int_{0}^{x}f_{4j}(x,\xi){w}d \xi +\int_{0}^{x}f_{5j}(x,\xi)z d \xi \bigg] dx \label{eq:dVhj}
\end{align}
where 
\begin{align*}
	\nu_1 =\min\{\frac{\lambda_{\min}(\mathbf{Q})}{\lambda_{\max}(P)}, \frac{2\nu}{a}\}.
\end{align*}

From \eqref{eq:target2-5}, we know $\rho(1,t)\neq 0 $. Using \eqref{V_equivalence} and \eqref{backstepping_equivalence}, we get the perturbation satisfies the bounds in terms of the kernel approximation $\varepsilon$
\begin{align}
    \Gamma(t)^2 \leq \dfrac{\varepsilon^2 }{k_1 b_1}V_j (t).
\end{align}

Consider the terms multiplied by $f_{ij},i=1,..,5$ in \eqref{eq:dVhj}. Combining Young’s inequality with Lemma \ref{lemma4.1}, we get that 
\begin{align*}
    \int_0^1\bigg|\frac{2a}{\Lambda_j^-}e^{\frac{\nu x}{\Lambda_j^-}}~{}\rho(x,t)f_{1j}(x)w(x,t) \bigg|\,dx  &\leq\frac{a}{\underline{\Lambda}^-}M_0 \sum_{S\in \mathscr{S}} | S(t) - S_0|\Bigg(\int_0^1\Big(\rho^2
    +w^2\Big)dx\Bigg) 
    \\
    &\leq c_1\sum_{S\in \mathscr{S}} | S(t) - S_0|V(t),
\end{align*}
where 
\begin{align*}
	c_1=\frac{aM_0}{\underline{\Lambda}^-}\max\left\{\frac{1}{b_1k_1},\frac{1}{k_1}\right\}	
\end{align*}
and we have used the boundedness of the exponential term and the equivalence between the norm of the states $\rho$, $w$, and the Lyapunov functional. In a similar fashion, using also Lemma \ref{lemma4.1}, we can estimate
\begin{align*}
	& \int_0^1 \bigg| \frac{2a}{\Lambda_j^-}e^{\frac{\nu x}{\Lambda_j^-}}~{}{\rho}f_{2j}(x)z(0,t) \bigg|\, dx \leq c_2\sum_{S\in \mathscr{S}} | S(t) - S_0|V +q_2 V\sum_{S\in \mathscr{S}} | S(t) - S_0|z^2(0,t)	
    \\
    &\int_0^1\bigg| \frac{2a}{\Lambda_j^-}e^{\frac{\nu x}{\Lambda_j^-}}~{}  \rho f_{3j}(x)X(t) \bigg|\,dx \leq c_3\sum_{S\in \mathscr{S}} | S(t) - S_0|V
    \\
    &\int_0^1 \bigg|\frac{2a}{\Lambda_j^-} e^{\frac{\nu x}{\Lambda_j^-}}~{} \rho \int_0^x f_{4j}(x,\xi)  w \,d\xi \bigg|dx\leq c_4\sum_{S\in \mathscr{S}} | S(t) - S_0|V 
    \\
    &\int_0^1\bigg|\frac{2a}{\Lambda_j^-}e^{\frac{\nu x}{\Lambda_j^-}}~{}\rho \int_0^x f_{5j}(x,\xi)  z\,d\xi \bigg|dx \leq c_5\sum_{S\in \mathscr{S}} | S(t) - S_0|V,
\end{align*}
with constants 
\begin{align*}
    c_2=\frac{a M_0}{\underline{\Lambda}^- k_1}, \quad q_2 = \frac{a M_0}{\underline{\Lambda}^- }, \quad c_3=\frac{aM_0}{\underline{\Lambda}^- k_1}, \quad c_4=c_5=c_1.
\end{align*} 
Therefore, we obtain
\begin{align*}
    \dfrac{dV_j}{d\Psi }h_j \leq& -\eta V_j(t) + \sum_{k=1}^3 2\left( a e^{\frac{\nu}{\Lambda^-_j}} 
    \left( (R_j)_k - (R_0)_k \right)^2 - e^{-\frac{\nu}{\lambda_{k_j}}}\right) \theta_k^2(1, t)
    \\
    &+ M_1 \sum_{S\in\mathscr{S}}|S_0 - S_j| V(t) 
    + \Bigg(c_2|\overline{S} - \underline{S}| + 2q_{1j}^2 + 2q_{2j}^2 + 2q_{3j}^2 - a+\frac{2|PB|}{\lambda_{min}(\mathbf{Q})}\Bigg) \rho^2(0, t),
\end{align*}
where $M_1 = c_1 +  c_2 + c_3 + c_4$ and
\begin{align*}
    \eta = \nu_1 - \bigg(\frac{1}{k_1 \underline{\Lambda}^+ } \Big(2\Sigma^{++}_j +\Sigma^{+-}_j+2\|{C}_j^+ \|_{\infty} +\|{C}_j^- \|_{\infty} + 2a\Sigma^{--}_j \Big) +\frac{1}{\underline{\Lambda}^+} + \dfrac{2a\varepsilon^2 e^{\frac{\nu }{\underline{\Lambda}}}}{k_1 b_1} \bigg)
\end{align*}
The coefficients $\nu_1$ and $a$ are chosen such that $\eta > 0$ and 
\begin{align*}
    c_2 |\overline{S} - \underline{S}|  + 2q_{1j}^2 + 2q_{2j}^2 + 2q_{3j}^2 - a  + \frac{2|PB|}{\lambda_{min}(\mathbf{Q})} < 0,
\end{align*}
where the $q_{1j}$, $q_{2j}$, $q_{3j}$ are the elements of $\mathbf{Q}_j$, and $\overline{S}$ and $\underline{S}$ are the upper and lower bounds of the stochastic parameters. 
There exists a constant $F_0>0$ such that for all $1 \leq j \leq r$, we have
    $V_j(\Psi) \leq F_0 V(\Psi).$
Thus, we get 
\begin{align*}
    \dfrac{dV_j}{d\Psi }h_j \leq -\overline{\eta} \, V(t) + M_1 \sum_{S\in\mathscr{S}}|S_0 - S_j| V(t) + \sum_{k=1}^3 \left( a e^{\frac{\nu}{\Lambda_j^-} \left( (R_j)_k - (R_0)_k \right)^2} - e^{- \frac{\nu}{\lambda _{kj}} } \right) \theta_k^2(1, t),
\end{align*}
where $\overline{\eta} = \eta F_0$, $k=1,2,3$. Finally, let us estimate the second term of $L_j V$. Using the mean value theorem, we have
\begin{align*}
    \sum_{l=1}^{r} \left( V_l(w,X) - V_j(w,X) \right) \tau_{jl} &= \sum_{l=1}^{r} \tau_{jl} \Big(\int_0^1 (K_0^Tw D_l(x) K_0 w - K_0^T w D_j(x) K_0 w )\, dx \Big)
    \\
    &\leq c_5 \sum_{l=1}^r \tau_{jl} \sum_{S\in\mathscr{S}}|S_0 - S(t)| V(t).
\end{align*}
Therefore, 
\begin{align*}
    L_j V(t) \leq& -\bar{\eta}V(t) + M_1\sum_{S\in\mathscr{S}}|S_0 - S(t)|V(t) + c_5\sum_{l=1}^r{\tau}_{jl}\sum_{S\in\mathscr{S}}|S_0 - S(t)|V(t) 
    \\
    & + \sum_{k=1}^3 \left(a e^{\frac{-\nu }{\Lambda^-_j}}\!\left((R_j)_k-(R_0)_k\right)^2 \!- e^{-\frac{\nu}{\lambda_{kj}}}\right)\theta_k^2(1,t).
\end{align*}
We use this to estimate the quantity 
\begin{align*}
    \bar{L} = \sum_{j=1}^r P_{ij}(0, t) L_j V(t).
\end{align*}
Since we know that 
\begin{align*}
    \sum_{j=1}^r P_{ij}(0,t)\sum_{S\in\mathscr{S}}|S_0 - S(t)|=\sum_{S\in\mathscr{S}}\mathbb{E}\left(|S_0 - S(t)|\right),
\end{align*} 
we can bound
\begin{align*}
    \bar{L} =& \sum_{j=1}^r\left[P_{\bar{y}}(0,t)L_jV(t)\right] 
    \\
    \leq& \sum_{j=1}^{r}P_{\bar{y}}(0,t)\bigg(-\bar{\eta}V(t) + M_1\sum_{S\in\mathscr{S}}|S_0 - S(t)|V(t) + c_5\sum_{l=1}^{r}P_{ij}(0,t)\tau_{jl}\sum_{S\in\mathscr{S}}|S_l - S(t)|V(t)\bigg)
    \\
    &+ P_{ij}(0,t)\sum_{k=1}^3 \Big(a e^{\frac{-\nu }{\Lambda^-_j}}\left((R_j)_k-(R_0)_k\right)^2 
    - e^{-\frac{\nu }{\lambda_{kj}}}\Big)\theta_k^2(1,t)
    \\
    \leq& -\bar{\eta}V(t) + M_1\sum_{S\in\mathscr{S}}\mathbb{E}(|S_0 - S(t)|)V(t) + c_5\sum_{j=1}^r P_{ij}(0,t)\sum_{l=1}^r\tau_{jl}\Big(\sum_{S\in\mathscr{S}} (|S_l - S_0| 
    +|S_0 - S(t)|)\Big)V
    \\
    &+ \sum_{k=1}^3\Big(d_2 \sum_{S\in\mathscr{S}}\mathbb{E}(|S_0 - S(t)|) - e^{\frac{-\nu}{\bar{\Lambda}^-}}\Big)\theta_k^2(1,t),
\end{align*}
and we get
\begin{align*}
    \bar{L} \leq& -V(t) \Big(\overline{\eta} - (M_1 + c_5 r \tau^{\star}) \mathbb{E} \left(|S_0 - S(t)|\right) + c_5 \mathcal{Z}(t) \Big) 
    \\
    &+ \sum_{k=1}^{3} \Big( d_2 \sum_{S\in\mathscr{S}} \mathbb{E} \left(  |S_0 - S(t)| \right) - e^{-\frac{\nu}{\bar{\Lambda}^-}} \Big) \theta_k^2(1, t).
\end{align*}
This finishes our proof.
\end{proof}

\begin{proof}[Proof of Theorem \ref{them_Main2}]
Assuming $\varepsilon^* < e^{-\frac{\nu}{\bar{\Lambda}^-}}$, we can estimate 
\begin{align*}
    \sum_{k=1}^3 \Big( d_2 \sum_{S\in\mathscr{S}}\mathbb{E} \left( |S_0 - S(t)| \right) - e^{-\frac{\nu}{\bar{\Lambda}^-}} \Big) \theta_k^2(1, t) < 0.    
\end{align*}
Then, thanks to Lemma \ref{lemma}, we have 
\begin{align*}
    \sum_{j=1}^r  P_{ij}(0, t) L_j V(t) \leq -V \bigg( \overline{\eta} - c_5 \mathcal{Z}
     -  (M_1 + c_5 r \tau^{\star}) \sum_{S\in\mathscr{S}}\mathbb{E} \left( |S_0 - S(t)| \right) \bigg).
\end{align*}
Define the functions
\begin{align*}
    &\phi(t) = \overline{\eta} - c_5 \mathcal{Z}(t) - (M_1 + c_5 r \tau^{\star}) \sum_{S\in\mathscr{S}}\mathbb{E} \left( |S_0 - S(t)| \right) 
    \\
    &\psi(t) = e^{\int_0^t \phi(y) \, dy} V(t).
\end{align*}
Taking the expectation of the infinitesimal generator $L$ of $\psi(t)$, we get
\begin{align*} 
    \mathbb{E} \left( \sum_{j=1}^{r} P_{ij}(0, t) L_j V(t) \right) \leq -\mathbb{E} \left( V(t) \phi(t) \right).     
\end{align*}
Moreover, we know that 
\begin{align*} 
    \mathbb{E} \left( \sum_{j=1}^{r} P_{ij}(0, t) L_j V(t) \right) = \mathbb{E}(L V(t)).    
\end{align*}
Thus $\mathbb{E}(L V(t)) \leq -\mathbb{E} \left( V(t) \phi(t) \right)$, and applying Dynkin’s formula we can conclude
\begin{align*} 
    \mathbb{E}(\psi(t)) - \psi(0) = \mathbb{E} \left( \int_{0}^{t} L \psi(y) \, dy \right) \leq 0.     
\end{align*} 
Furthermore, we can expand
\begin{align*}
    \mathbb{E}(\psi(t)) = \mathbb{E} \left(\! V(t) e^{ \int_{0}^{t} \biggl( \overline{\eta} - c_5 Z(y) - (M_1 + c_5 r \tau^{\star}) \mathbb{E} \left( |S_0 - S(t)| \right) \biggr) dy }\! \right).
\end{align*}
We already know that
\begin{align*}
    \int_0^t \mathcal{Z}(y) \, dy &= \int_0^t \Big( \sum_{j=1}^r |S_0 - S(t)| \Big( \partial_y P_{ij}(0, y) + c_j P_{ij}(0, y) \Big) V(y) \Big) \, dy 
    \\
    &\leq \sum_{S\in\mathscr{S}}\mathbb{E} \left( |S_0 - S(t)| \right) + r \tau^{\star} \int_0^t \mathbb{E} \left( |S_0 - S(t)| \right) \, dy,
\end{align*}
Using this inequality, we get
\begin{align*}
    \mathbb{E}(\psi(t)) \geq \mathbb{E} \left( V(t) e^{\left( -c_5 \varepsilon^{\star} + \int_{0}^{t} \left( \overline{\eta} - (M_1 + 2 c_5 r \tau^{\star}) \varepsilon^{\star} \right) dy \right)} \right).    
\end{align*}
Then, if we take $\varepsilon^{\star}$ as
\begin{align*}
    \varepsilon^{\star} = \frac{\overline{\eta}}{2(2 c_5 r \tau^{\star} + M_1)},    
\end{align*}
we have
\begin{align*}
	\mathbb{E}(\psi(t)) \geq \mathbb{E} \left( V(t) e^{\left( -c_5 \varepsilon^{\star} + \frac{\overline{\eta}}{2} t \right)} \right).  
\end{align*}
Since we know $\mathbb{E}(\psi(t)) \leq \psi(0)$, we can then conclude that 
\begin{align*}
	\mathbb{E}(V(t)) \leq e^{c_5 \varepsilon^{\star}} e^{-\zeta t} V(0),     
\end{align*}
where $\zeta = \overline{\eta}/2$. The function $V(t)$ is equivalent to the $L^2$-norm of the system.	
\end{proof}

\section{Numerical simulations}\label{sec:simulations}

\subsection{Simulations' configuration} The ODE dynamics are defined by 
\begin{align*}
	A = \begin{bmatrix} 0 & 2 \\ -2 & 0 \end{bmatrix}, \quad B = \begin{bmatrix} 2 \\ 1 \end{bmatrix}, \quad C = \begin{bmatrix} 1 & 0 \\ 0 & 0 \\ 0 & 0 \end{bmatrix}, \quad K = \begin{bmatrix} -2 & -1 \end{bmatrix}.		
\end{align*}

The nominal parameters are deterministic with values
\begin{align*}
	& \Lambda_0^+ = \mathrm{diag}(1, 1.01, 0.98), 
	\quad \Sigma_0^{++} = \mathrm{diag}(0.3, 0.3, 0.3), 
	\quad \Sigma_0^{+-} = \begin{bmatrix} 0.5 & -0.1 & 0.2 \end{bmatrix}^\top, 
	\\
	&\Sigma_0^{-+} = \begin{bmatrix} 0.3 & -0.2 & 0.1 \end{bmatrix}, 
	\quad \Sigma_0^{--} = 0.3, 
	\quad Q_0 = \begin{bmatrix} 1 & 1.05 & 1 \end{bmatrix}^\top,
	\quad R_0 = \begin{bmatrix} 1 & 1 & 1 \end{bmatrix}, 
	\quad \Lambda_0^- = 1.	
\end{align*}

For the stochastic parameters in original system $ \{\Lambda^+, \Lambda^-, \Sigma^{++}, \Sigma^{+-}, \Sigma^{-+}, \Sigma^{--}, Q, R\}$, we define six possible modes around their nominal values as in Table \ref{table1}.
	
\begin{table}[htbp]
	\centering	
	\begin{tabular}{lcccccc}
		\hline
		{\begin{tabular}[c]{@{}l@{}}Para- \\ meter\end{tabular}} & Mode 1 & Mode 2 & Mode 3 & Mode 4 & Mode 5 & Mode 6 \\
		\hline
		$\Lambda^+_{11}$ & 0.85 & 0.95 & 1.00 & 1.05 & 1.15 & 1.25 \\
		\hline
		$\Lambda^-$ & 0.80 & 0.95 & 1.00 & 1.10 & 1.25 & 1.50 \\
		\hline
		$\Sigma^{++}_{11}$ & 0.15 & 0.25 & 0.30 & 0.35 & 0.45 & 0.55 \\
		\hline
		$\Sigma^{+-}_{1}$ & 0.30 & 0.40 & 0.50 & 0.60 & 0.70 & 0.80 \\
		\hline
		$\Sigma^{-+}_{1}$ & 0.15 & 0.25 & 0.30 & 0.35 & 0.45 & 0.55 \\
		\hline
		$\Sigma^{--}$ & 0.15 & 0.25 & 0.30 & 0.35 & 0.45 & 0.55 \\
		\hline
		$Q_{1}$ & 0.70 & 0.85 & 1.00 & 1.15 & 1.30 & 1.45 \\
		\hline
		$R_{1}$ & 0.70 & 0.85 & 1.00 & 1.15 & 1.30 & 1.45 \\
		\hline
	\end{tabular}
	\vspace{0.5cm}
	\caption{Possible values for each stochastic parameter}\label{table1}
\end{table}
	
The transition probabilities of Markov process are computed by solving the Kolmogorov equation \eqref{eq:kolmogorov} with transition rates $\tau_{ij}(t)$ as in \cite{auriol2023mean}. 

\subsection{Dataset generation and NO training}

To generate the dataset over which we trained DeepONet, we sampled $5000$ values for 
\begin{align*}
	& \Lambda^-\sim U(0.8, 1.5), \quad \Lambda^+_{11} \sim U(0.8, 1.3), \quad \Sigma^{++}_{11} \sim U(0.1, 0.6),
	\\
	& \Sigma^{+-}_{11} \sim U(0.2, 1.9), \quad\Sigma^{-+}_{11} \sim U(0.1, 0.6), \quad \Sigma^{--}_{11} \sim U(0.1, 0.6), 
	\\
	&Q_1 \sim U(0.6, 1.5),\quad R_1 \sim U(0.6, 1.5),	
\end{align*}
with $U$ denoting the uniform distribution over the interval. And, for each one of those values, we solved the kernel equations \eqref{eq:kernel_1}. In this way, we got a dataset of $5000$ data $(K_0, N_0, \gamma)$. With this dataset, we trained DeepONet using an Nvidia RTX 4060 Ti GPU for $600$ epochs. The source code is available on \href{https://github.com/DeustoTech/CoDeFeL-Stochastic_NO}{\texttt{Github}}. 

With the trained DeepOnet model, we have computed new backstepping kernels (not included in the training dataset) and compared with their analytical counterpart. In Table \ref{ta:kernel}, we see that this comparison is quite accurate. Moreover, Table \ref{tab:nopspeedups} displays the computational times of the two approaches under different levels of stochasticity. As the spatial resolution increases, the neural operator maintains a nearly constant runtime, achieving speedups of up to several thousand times. 

\begin{table}[h]
	\centering
	\begin{tabular}{c|c|c}
		\hline
		Kernel 
		& Mean absolute error 
		& Max absolute error \\ 
		\hline
		$K_{0}$ & $3.62 \times 10^{-4}$  & $2.32\times 10^{-2}$ \\
		
		$N_{0}$ & $7.42 \times 10^{-4}$  & $1.97\times 10^{-2}$ \\
		
		$\gamma$ & $6.47 \times 10^{-4}$  & $1.56\times 10^{-2}$ \\
		
		\hline
	\end{tabular}
	\vspace{0.5cm}
	\caption{Errors between analytical and DeepONet kernels}\label{ta:kernel}
\end{table}

\subsection{Backstepping stabilization via NO}

We show through an illustrative example that the NO-computed kernels yield an effective control for the original PDE-ODE model \eqref{eq:5}. To this end, we set the time interval $t\in (0,70)$, the initial data $w(x,0) = \sin(2\pi x)\,[1\; 1\; 1]^T,z(x,0) = x, X(0) = [1\; -1]^T,$
and apply the NO-computed control to the corresponding dynamics. We can see in Figure \ref{closed_loop} show this control is capable of stabilizing the dynamics. Finally, we tested the stability of the learned operator on out-of-distribution parameters and time-varying Markov rates. This stability has been assessed by varying the values of the parameter $\Lambda^-$ computing the sum of the norms of all system states at the final time under the corresponding NO control. The results in Table \ref{tab:parameter_modes} show that for parameter values moderately outside the training distribution, we still get accurate predictions. However, when the out-of-distribution parameters are far from the training distribution, the performance deteriorates and the predictions become unreliable. This highlight the importance of adequately covering the parameter space during training to ensure robust generalization.

\begin{table}[htbp]
	\centering	
	\begin{tabular}{lccc}
		\hline
		\multicolumn{1}{c}{\textbf{\begin{tabular}[c]{@{}l@{}}Spatial step\\ Size (dx)\end{tabular}}} & 
		\multicolumn{1}{c}{\textbf{\begin{tabular}[c]{@{}c@{}}Analytical kernel\\ computational time\end{tabular}}} & 
		\multicolumn{1}{c}{\textbf{\begin{tabular}[c]{@{}c@{}}NO Kernel\\ computational time \end{tabular}}} & 
		\multicolumn{1}{c}{\textbf{Speedup $\uparrow$}} \\
		\hline
        \multicolumn{4}{c}{\textbf{1 random parameters($\Lambda^-$)}} \\
		0.01 & 3.104 & 0.023 & 135$\times$ \\
        0.005 & 11.805  & 0.025   & 472$\times$\\
		0.001 & 261.50 & 0.036 & 7264$\times$ \\
		\hline
		\multicolumn{4}{c}{\textbf{All 8 random parameters}} \\
		0.01 & 3.456 & 0.025 & 138$\times$ \\
        0.005 & 12.805  & 0.035   & 365$\times$\\
		0.001 & 290.10 & 0.040 & 7253$\times$ \\
		\hline
	\end{tabular}
	\vspace{0.5cm}
	\caption{Computational performance}\label{tab:nopspeedups}
\end{table}
    
\begin{table}[htbp]
	\centering
	\begin{tabular}{c c c}
		\hline
		$\Lambda^{-}$ value & Distribution type & Final-state norm\\
		\hline
		0.80 & In-distribution & $1.21 \times 10^{-5}$ (Stable) \\
		1.10 & In-distribution & $ 1.44\times 10^{-5}$ (Stable) \\
		2.00 & Out-of-distribution & $ 1.67\times 10^{-3}$ (Stable) \\
		2.50 & Out-of-distribution & $5 \times 10^{3}$ (UnStable)  \\
		\hline
	\end{tabular}
	\vspace{0.5cm}
	\caption{Generalization performance under in-distribution and out-of-distribution values of $\Lambda^{-}$}\label{tab:parameter_modes}	
\end{table}
    
\begin{figure}[htbp!]
    \centering
    \includegraphics[width=0.4\textwidth]{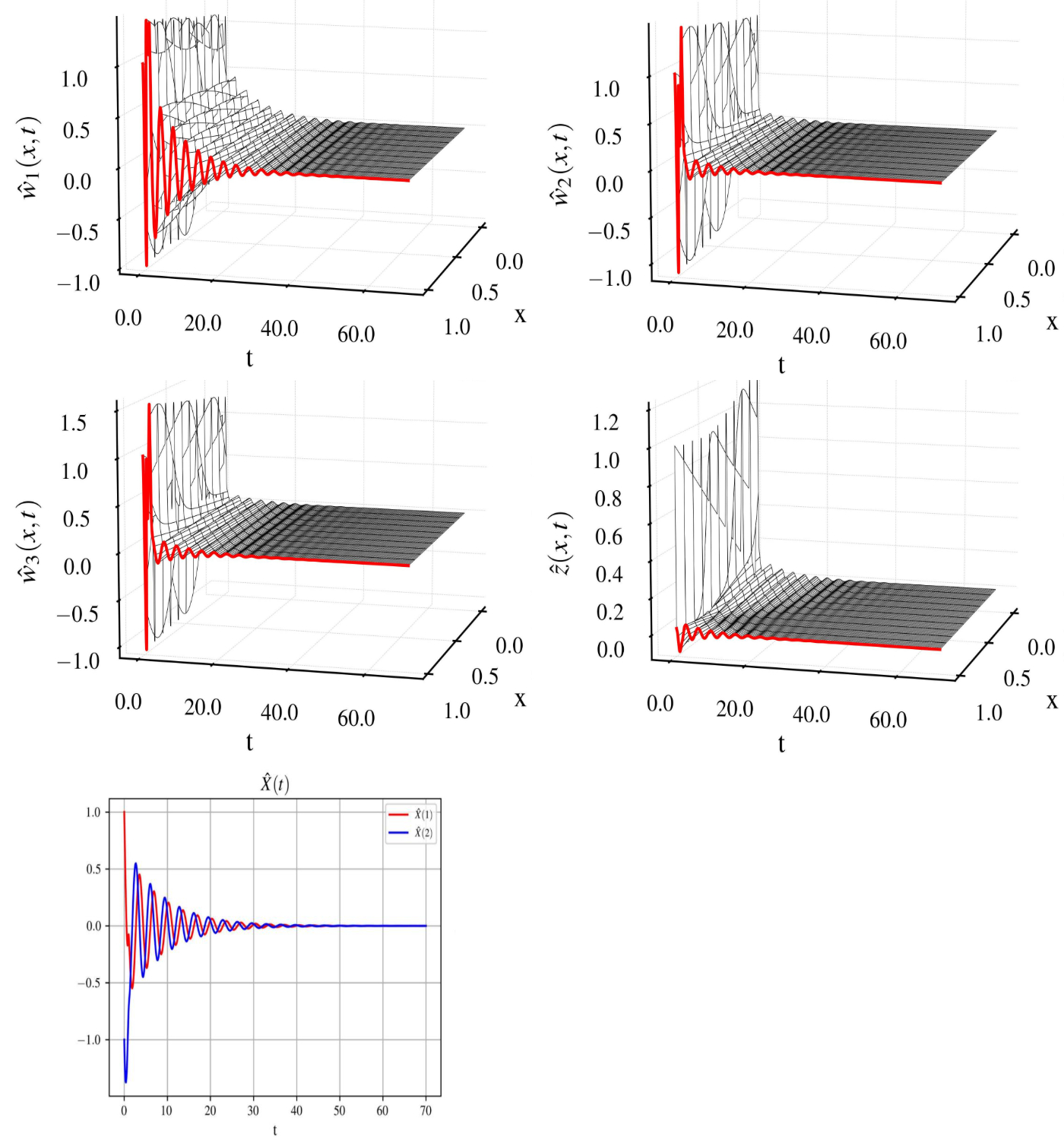}
    \caption{The states of closed-loop system with NO-based controller.}\label{closed_loop}
\end{figure} 

\section{Conclusions and open problems}\label{sec:conclusions}
In this paper, we proposed a NO-based framework for the mean-square exponential stabilization of a coupled PDE-ODE system with Markovian switching parameters. Leveraging the backstepping control method, we derived mode-dependent stability conditions that rely on the solution of kernel equations. However, solving these kernel equations directly in a stochastic setting is computationally intensive. To address this challenge, we trained DeepONet to approximate the mapping from system parameters to backstepping kernels. This approach bypasses repeated online kernel computations, allowing controllers to be synthesized in real time. Numerical experiments confirmed that our method achieves over two orders of magnitude speedup compared to classical approaches, while preserving high accuracy and ensuring closed-loop stability under random mode switching. These results highlight the potential of neural operator techniques as powerful surrogates for control design in complex stochastic PDE systems.

Overall, this work demonstrates that neural operator surrogates can bridge the gap between theoretical backstepping designs and their computational feasibility in stochastic settings, paving the way for scalable and adaptive control strategies in distributed parameter systems.

\medskip 
\noindent Building upon these findings, several promising research directions emerge:
\begin{itemize}
    \item[1.] \textbf{Extension to multi}-\textbf{dimensional PDEs.} We plan to generalize the neural operator–based kernel approximation to two-dimensional PDE systems within the backstepping control framework. This extension is nontrivial, as it introduces additional complexity in kernel design, operator representation, and training data generation, but it would significantly broaden the scope of practical applications (e.g., in fluid dynamics or flexible structures).
    \item[2.] \textbf{Finite}-\textbf{time stabilization with time-varying kernels.} Another direction is to employ neural operators in finite-time stabilization schemes, where the kernels are explicitly time-dependent and must be computed online. By enabling real-time approximation of such kernels, neural operators could provide fast and robust control strategies that meet stricter convergence and performance requirements.
    \item[3.] \textbf{Stochastic extension to ODE matrices.} An interesting direction for future research is the extension of the current framework to stochastic ODE matrices $A$ and $B$. This extension presents significant theoretical challenges, particularly regarding the solution of the Lyapunov equation with random coefficients and the corresponding stability analysis.
\end{itemize}
	
\section*{Acknowledgments} We thank Prof. Enrique Zuazua (Friedrich-Alexander-Universit\"at Erlangen-N\"urnberg, University of Deusto and Universidad Aut\'onoma de Madrid) and Prof. Francisco Periago (Technical University of Cartagena, Spain) for fruitful and interesting discussions over the topics of this paper.  

\bibliographystyle{acm}
\bibliography{refDataBase} 
	
\end{document}